\def\setliststart#1{\setcounter{\@listctr}{#1}%
  \addtocounter{\@listctr}{-1}}
 \newtheorem{The}{Theorem}[section]
 \newtheorem{Cor}[The]{Corollary}
 \newtheorem{Lem}[The]{Lemma}
 \newtheorem{Pro}[The]{Proposition}
 \theoremstyle{definition}
 \newtheorem{defn}[The]{Definition}
 \newtheorem{Rem}[The]{Remark}
 \numberwithin{equation}{section}
\newcommand{\R}{\mathbb{R}}
\newcommand{\N}{\mathbb{N}}
\newcommand{\Lip}{\mbox{\rm Lip}\,}
\newcommand{\Liploc}{\mbox{$\mathrm{Lip}_{\mathrm{loc}}$}}
\title[Viscosity solution to timelike Lorentzian eikonal equation]{Viscosity solutions to a Cauchy type problem for timelike Lorentzian eikonal equation}
\author{Siyao Zhu, Xiaojun Cui \and Tianqi Shi}
\subjclass[2010]{53B30, 53C22, 49L25, 35F21}
\keywords{Lorentzian eikonal equation, globally hyperbolic space-time, viscosity solution, weak KAM theory}
\address{Department of Mathematics, Nanjing University, Nanjing 210093, China}
\email{zsydtc0807@163.com}
\address{Department of Mathematics, Nanjing University, Nanjing 210093, China}
\email{xcui@nju.edu.cn}
\address{Department of Mathematics, Nanjing University, Nanjing 210093, China}
\email{tqshi.math@gmail.com}
\begin{document}
\maketitle
\begin{abstract}
	In this paper, we propose a Cauchy type problem to the timelike Lorentzian eikonal equation on a globally hyperbolic space-time. For this equation, as the value of the solution on a Cauchy surface is known, we prove the existence of viscosity solutions on the past set (future set) of the Cauchy surface. Furthermore, when the time orientation of viscosity solution is consistent, the uniqueness and stability of viscosity solutions are also obtained.
\end{abstract}
\section{Introduction}
In the study of global Lorentzian geometry,  timelike Lorentzian eikonal equation
\begin{align*}
	g(\nabla u,\nabla u)=-1
\end{align*}
plays a vital role, where $g$ is the Lorentzian metric. The integral curves of a gradient field of a smooth solution, if exists, are timelike geodesics. Geometrically, many distance-like functions, such as Busemann functions, are essentially the solutions to timelike eikonal equation in some sense. Owing to this fact, distance-like functions are one of the key points to construct the compactification and completion of a manifold and to prove splitting theorem, even for finding the solutions to Einstein field equations. For more details, readers can refer \cite{beem_global_1981,flores_gromov_2013}.

It is well known an eikonal equation, as a particular kind of Hamilton-Jacobi equation, has no global smooth solutions in general. Actually, the expectable maximal regularity of these solutions is the local semiconcavity (see \cite{Cannarsa_Cheng3,Cannarsa_Sinestrari_book,Villani_book2009} for more information).
For the well-posedness of Hamilton-Jacobi equations, a more natural choice in some sense is the notion of viscosity solutions, systematically introduced by M-G. Crandall, L-C. Evans and P-L. Lions in \cite{Crandall_Evans_Lions1984,Crandall_Lions1983} during the early 1980s, although the discussion on well-posedness of evolutionary Hamilton-Jacobi equations
\begin{align*}
	\frac{\partial u}{\partial t}+H\left(x,\frac{\partial u}{\partial x}\right)=0,\,\,(t,x)\in\R^+\times M
\end{align*}
had been developed in \cite{Douglis1961,Kruzkov1975,Krylov_book1987} before the theory of viscosity solutions. Until now, many monographs have done rich work on this topic. Since late 1990s, for Tonelli Hamiltonian $H$ which includes the eikonal case, Fathi established weak KAM theory, which made the connection between Aubry-Mather theory (\cite{Mather1991,Mane1992,Mather1993}) and the theory of viscosity solution of stationary Hamilton-Jacobi equation
\begin{align*}
	H(x,Du(x))=c,\,\,x\in M,
\end{align*}
 on Riemannian manifold (\cite{Fathi2020,Fathi_book,Fathi1997_1,Fathi1997_2,Fathi1998_1,Fathi1998_2}). Cauchy and Dirichlet problems of Hamilton-Jacobi equations are partly mentioned in \cite{Fathi2012,CCMW2019}.

To our knowledge, the convexity (positive definiteness), coercivity  (even superlinearity) of Hamiltonians are needed essentially in the classical theory of viscosity solutions, Mather theory and weak KAM theory, but these properties cannot apply to the general space-time. Even so, there are also a few research papers on viscosity solutions to eikonal equation of various space-time in recent years. Following from \cite{andersson_cosmological_1998}, the negative regular cosmology time function is the viscosity solution of eikonal equation, which means a kind of viscosity solutions is found in a specific globally hyperbolic space-time (\cite{cui_negative_2014}). Moreover, in the case of class A torus, \cite{jin_global_2018} and \cite{suhr2011aubrymather} developed the well-posedness of eikonal equations, weak KAM theory and Aubry-Mather theory for corresponding geodesic flows. \cite{suhr2019aubry} is also a good reference for Aubry-Mather theory for compact space-time. More recently, the work in \cite{zhu2023global} dealt the related problems in globally hyperbolic space-time. Authors gave the existence of global-defined viscosity solutions and the local variational representation of a solution which has a consistent time-orientation.

  Based on the ideas above, we consider a Cauchy type problem of eikonal equation on a globally hyperbolic space-time. Main results of this paper are as follows.

   Suppose $(M,g)$ is a globally hyperbolic space-time, then there exists a unique viscosity solution of the following problem:
\begin{align}\label{pre-ecg}
\left\{
	\begin{array}{ll}
		g(\nabla u(x), \nabla u(x))=-1, & x\in I^-[\Gamma],\\
		u|_\Gamma=\phi\in\Liploc(\Gamma),&
	\end{array}
	\right.
\end{align}
with always past-directed directed time orientation, where $\Gamma$ is a Cauchy surface. Moreover, this solution can also be represented by a variational formula as
\begin{align*}
	u_\phi(x)=\inf_{y\in J_\Gamma^+(x)}\{\phi(y)-d(x,y)\}
\end{align*}
 and it yields that the solution is semiconcave. In the end, we show that \eqref{pre-ecg} continuously depends on the initial value function on fixed $\Gamma$.

The paper is organized as follows. In Section 2, we introduced some basic materials involving the Lorentzian geometry and causality theory, semiconcave functions and viscosity solutions. In Section 3, we answered affirmatively on the well-posedness of \eqref{pre-ecg} (Theorem \ref{lem7}, \ref{thm:unique} and \ref{thm:conti}). The general regularity and the variational formula of the solution were also given in Proposition \ref{pro:semiconcave} and Theorem \ref{lem7}.
\section{Preliminaries}
\subsection{Lorentzian causality theory}
First, we recall some basic concepts and facts on Lorentzian geometry mentioned in \cite{beem_global_1981,cui_viscosity_2016,cui_negative_2014,flores_gromov_2013,jin_global_2018}, as the setting of this paper.

Let $(M, g)$ be a \textit{space-time}, i.e., a connected, time-oriented, smooth Lorentzian manifold with the Lorentzian metric $g$, where the signature for $g$ is $(-,+, \cdots , +)$. A point $p\in M$ is usually called an \textit{event} from the viewpoint of general relativity. For each $p\in M$, we use $T_p M$ and $TM$ to denote the tangent space at $p$ and tangent bundle of $M$, respectively.

A  tangent vector $V \in T_p M$ is called \textit{timelike}, \textit{spacelike} or \textit{lightlike} if  $g(V, V)<0$, $g(V, V)>0$ or $g(V, V)=0$ ($V\neq0$), respectively. Also, a tangent vector $V\in T_p M$ is called \textit{causal} if $V$ is either timelke or lightlike and is called \textit{non-spacelike} if $g(V, V)\leq 0$. In other words, the set of \textit{non-spacelike} vectors consists of zero vector and causal vectors.

Thus, the property of time-orientation of $(M,g)$ ensures that $M$ admits a continuous and nowhere vanishing timelike vector field $X$, which is used to separate the causal vectors at each base point into \textit{past-directed} and \textit{future-directed} ones. More precisely, a causal tangent vector $V\in T_p M$ is said to be \textit{past-directed} (resp. \textit{future-directed}), if $g(X(p), V)>0$ (resp. $g(X(p), V)<0$).

Furthermore, a piecewise $C^1$ curve $\gamma: I\rightarrow M$ ($I\subset \R$ is an interval) is called causal, timelike, lightlike and past-directed or future-directed if the tangent vector $\dot{\gamma}(s)$ is causal, timelike, lightlike and past-directed or future-directed at every differentiable point $\gamma(s)$.

 For two events $p, q\in M$, if there exists a future-directed timelike
(resp. non-spacelike) curve from $p$ to $q$, we say $p$ and $q$ are \textit{chronologically related} (resp. \textit{causally related}) and denote by $p\ll q$ (resp. $p\leq q$). For any fixed $p\in M$ and $A\subset M$, it is common practice to define \textit{chronological past} $I^-(p), I^-[A]$, \textit{chronological future} $I^+(p), I^+[A]$, \textit{causal past} $J^-(p), J^-[A]$, and \textit{causal future} $J^+(p), J^+[A]$ as follows:
\begin{align*}
I^-(p)&:=\{q\in M: q\ll p\},\,I^-[A]:=\{q\in M: q\ll s ~~\text{for some}~~ s\in A\},\\
I^+(p)&:=\{q\in M: p\ll q\},\,I^+[A]:=\{q\in M: s\ll q ~~\text{for some}~~ s\in A\},\\
J^-(p)&:=\{q\in M: q\leq p\},\,\,J^-[A]:=\{q\in M: q\leq s ~~\text{for some}~~ s\in A\},\\
J^+(p)&:=\{q\in M: p\leq q\},\,\,J^+[A]:=\{q\in M: s\leq q ~~\text{for some}~~ s\in A\}.
\end{align*}
We call a subset $A$ of $M$ \textit{achronal} (resp. \textit{acausal}) if no two points of $A$ are chronologically related (resp. causally related).
 A space-time $(M, g)$ is said to be causal if there are no causal loops. A space-time $(M, g)$ is \textit{globally hyperbolic} if it is causal and the sets $J^+(p)\cap J^-(q)$ are compact for all $p, q\in M$.

 Throughout this paper, we always assume that $(M,g)$ is a globally hyperbolic space-time. Simultaneously, we are going to use two metrics on $M$: one is the Lorentzian metric $g$, the other is an auxiliary complete Riemannian metric $h$. The length functional, distance function, the norm and gradient associated to $g$ are denoted by $L(\cdot)$, $d(\cdot, \cdot)$, $|\cdot|$ and $\nabla$ respectively. Analogously, the corresponding length functional, distance function, norm and gradient associated to $h$ are denoted by $L_h(\cdot)$, $d_h(\cdot, \cdot)$, $|\cdot|_h$ and $\nabla_h$, respectively. More specifically, for a piecewise $C^1$ causal curve $\gamma: [0,1]\rightarrow M$, $L(\gamma)$ is given by
\begin{align*}
L(\gamma):=\int_0^1 \sqrt{-g(\dot{\gamma}(s), \dot{\gamma}(s))}\,ds.
\end{align*}
For any $x,y\in M$ with $x\leq y$, $d(x,y)$ is defined as
\begin{align*}
d(x,y):=\sup\left\{L(\gamma):\gamma \in \Omega_{x,y}\right\},
\end{align*}
where $\Omega_{x,y}$ denotes the path space of all future-directed causal and constant curves $\gamma:[0,1]\rightarrow M$ with $\gamma(0)=x$ and $\gamma(1)=y$.
 For a non-spacelike vector $V\in TM$, the associated Lorentzian norm is defined by $|V|:=\sqrt{-g(V,V)}$.

So far, the results on curvature in Lorentz geometry have been abundant. Here we only extract the following conclusions as the theoretical support of this paper, which will be revisited in Sections \ref{sec:existence} and \ref{sec:conti-depend}. For more details, we refer to \cite{alias_geometric_2010}.
\begin{Lem}[\protect{\cite[Lemma 3.1]{alias_geometric_2010}}]\label{lemLL}
Let $M^{n+1}$ be an $(n+1)$-dimensional spacetime such that the sectional curvatures of timelike planes of $M$ are bounded from above by some constant $c\in \R$. Assume that there exists $p\in M$ such that $\mathcal{I}^+(p)\neq \varnothing$, and let $q\in \mathcal{I}^+(p)$, (with $d(p, q)<\pi/ \sqrt{-c}$ when $c<0$). Then for every spacelike vector $V\in T_qM$ orthogonal to the gradient of $d(p, q)$ it holds that
\begin{align}\label{A20}
(\mathbf{Hess}\,d(p,q))\,(V,V)\geqslant -f_c(d(p,q))\langle V,V\rangle,
\end{align}
where $\mathbf{Hess}$ stands for the Hessian operator on $M$. When $c<0$ and $d(p, q )\geqslant \pi/\sqrt{-c}$, then it still holds that
\begin{align}\label{A21}
(\mathbf{Hess}\,d(p,q))\,(V,V)\geqslant -\frac{1}{d(p,q)}\langle V,V\rangle\geqslant-\frac{\sqrt{-c}}{\pi}\langle V,V\rangle.
\end{align}Here, the function $f_c$ above is defined as
\begin{align*}
 f_c(s):=\left\{
 \begin{array}{ll}
\sqrt{c}\coth(\sqrt{c}s), &c>0, s>0,  \\
 1/s, &c=0, s>0,  \\
\sqrt{-c}\cot(\sqrt{-c}s), &c<0, 0<s<\pi/\sqrt{-c}.
\end{array}\right.
\end{align*}
\end{Lem}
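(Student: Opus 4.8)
The plan is to reduce the statement to a one–dimensional Riccati comparison along the maximizing timelike geodesic joining $p$ to $q$, exactly as in the Riemannian Hessian comparison theorem, the only genuinely new feature being the Lorentzian sign conventions. First I would fix a unit-speed future-directed timelike geodesic $\gamma\colon[0,\ell]\to M$ with $\gamma(0)=p$, $\gamma(\ell)=q$ and $\ell=d(p,q)$, which exists because $(M,g)$ is globally hyperbolic (Avez--Seifert), and I would work at a point $q$ at which $r:=d(p,\cdot)$ is twice differentiable, i.e. $q$ is reached by a unique maximizing geodesic and is not conjugate to $p$ (at cut points of $p$ the inequality is read in the usual barrier/support-function sense). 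Along $\gamma$ one has $\nabla r=-\dot\gamma$ --- the sign flip coming from raising an index with the signature $(-,+,\dots ,+)$ --- so $\nabla r$ is past-directed, and what we must estimate is $\mathbf{Hess}\,d(p,q)$ restricted to the spacelike hyperplane $\dot\gamma(\ell)^{\perp}$; on that hyperplane $g=\langle\cdot,\cdot\rangle$ is positive definite, so operator inequalities are meaningful.

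Next I would introduce, for $t\in(0,\ell]$, the self-adjoint operator $A(t)$ on $\dot\gamma(t)^{\perp}$ characterized by $J'(t)=A(t)J(t)$ for every Jacobi field $J$ along $\gamma$ with $J(0)=0$ (the shape operator of the Lorentzian distance spheres). A direct computation with the variation of geodesics issuing from $p$ gives $\mathbf{Hess}\,r|_{\dot\gamma^{\perp}}=-A$, while the Jacobi equation $J''+R(J,\dot\gamma)\dot\gamma=0$ becomes the matrix Riccati equation $A'+A^{2}+\mathcal R=0$, where $\mathcal R(t)V:=R(V,\dot\gamma)\dot\gamma$ and $'$ denotes covariant differentiation along $\gamma$; moreover $A(t)\sim\tfrac1t\,\mathrm{Id}$ as $t\to0^{+}$, and $A$ is smooth on all of $(0,\ell]$ because a maximizing timelike geodesic has no conjugate points in its interior. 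The curvature hypothesis enters here: for a spacelike unit $V\perp\dot\gamma$ the plane $\mathrm{span}(V,\dot\gamma)$ is timelike with sectional curvature $-\langle\mathcal R(t)V,V\rangle$, so the bound $\le c$ is exactly $\mathcal R(t)\ge -c\,\mathrm{Id}$, and therefore $A'+A^{2}\le c\,\mathrm{Id}$ along $\gamma$.

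It remains to compare $A$ with the scalar model. One checks that $f_c$ solves $f_c'+f_c^{2}=c$ with $f_c(t)\sim 1/t$ as $t\to0^{+}$, so $f_c\,\mathrm{Id}$ is the model solution and $A$ a subsolution of the same equation with the same singular initial behaviour. A standard Riccati/Sturm comparison --- e.g. setting $B:=f_c\,\mathrm{Id}-A$, noting $B(t)\to 0$ as $t\to0^{+}$, and checking that $B$ satisfies a linear-type equation with the sign that propagates $B\ge 0$ --- then yields $A(t)\le f_c(t)\,\mathrm{Id}$ for all $t\in(0,\ell]$, provided $f_c$ is defined there; this is automatic when $c\ge 0$ and needs precisely $\ell=d(p,q)<\pi/\sqrt{-c}$ when $c<0$. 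Evaluating at $t=\ell$ and using $\mathbf{Hess}\,r=-A$ gives \eqref{A20}. For the remaining range $c<0$, $d(p,q)\ge\pi/\sqrt{-c}$, I would run the same comparison against $1/t$ instead of $f_c$, which is legitimate because $(1/t)'+(1/t)^{2}=0\ge c$; this gives $A(t)\le\tfrac1t\,\mathrm{Id}$, hence $\mathbf{Hess}\,d(p,q)\ge-\tfrac1{d(p,q)}\langle\cdot,\cdot\rangle$, and the second inequality in \eqref{A21} follows from $d(p,q)\ge\pi/\sqrt{-c}$.

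The main obstacle is bookkeeping rather than conceptual: pinning down every Lorentzian sign --- that $\nabla r=-\dot\gamma$, that the timelike sectional curvature equals $-\langle\mathcal R V,V\rangle$ so that an \emph{upper} curvature bound becomes a \emph{lower} bound on $\mathcal R$, and the resulting overall minus sign in the conclusion --- and then launching the Riccati comparison from the singular datum at $t=0$ while keeping $A$ regular on $(0,\ell]$, which is exactly where maximality of $\gamma$, and the threshold $\pi/\sqrt{-c}$ in the case $c<0$, are used. A secondary point is to state cleanly in what sense the estimate holds at cut points of $p$, where $r$ is no longer $C^{2}$.
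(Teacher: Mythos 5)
This lemma is quoted by the paper from \cite{alias_geometric_2010} without proof, so there is no in-paper argument to compare against. Your Riccati/Jacobi-field comparison is the standard (and, as far as I can tell, also the cited reference's) route to the Lorentzian Hessian comparison theorem, and the sign bookkeeping you single out as the crux is handled correctly: $\nabla r=-\dot\gamma$ for the unit-speed maximizing geodesic; $\mathbf{Hess}\,r|_{\dot\gamma^{\perp}}=-A$ where $A'+A^{2}+\mathcal R=0$ with $A\sim t^{-1}\mathrm{Id}$; the upper bound $c$ on timelike sectional curvatures becomes $\mathcal R\ge -c\,\mathrm{Id}$ precisely because $\langle V,V\rangle\langle\dot\gamma,\dot\gamma\rangle<0$; $f_c$ indeed solves $f_c'+f_c^{2}=c$ with the same $1/t$ blow-up; and replacing $f_c$ by $1/t$ (which satisfies $(1/t)'+(1/t)^{2}=0\ge c$) gives the estimate in the regime $c<0$, $d(p,q)\ge\pi/\sqrt{-c}$, with the final inequality in \eqref{A21} following trivially. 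The only point worth stating more plainly: the lemma as written (and as used in Proposition~\ref{pro:semiconcave}) presumes $q$ is reached by a unique maximizing geodesic with no conjugate point, so that $d(p,\cdot)$ is $C^\infty$ near $q$ and $\mathbf{Hess}\,d(p,q)$ is a genuine pointwise object; your aside about barrier sense at cut points is harmless but not needed for the literal claim.
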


\subsection{Semiconvex functions and generalized gradient}
 \begin{defn}[\protect{\cite[Definition 2.6]{cui_negative_2014}}, Semiconvex/Semiconcave function]
 	Let $O$ be an open subset of $M$. A function $\psi: O\to\R$ is said to be \textit{semiconvex} (with constant $C$), if there exists  $C>0$ such that for any constant-speed geodesic path $\gamma(t)$, $t\in[0, 1]$, $\gamma(t)\in O$,
\begin{align}\label{E8}
         \psi(\gamma(t))\leq (1-t)\psi(\gamma(0))+t\psi(\gamma(1))+C\frac{t(1-t)}{2}d^2_h(\psi(\gamma(0)),\psi(\gamma(1))).
\end{align}A function $\psi:O\to\R$ is said to be \textit{semiconcave}, if $-\psi$ is semiconvex.
 \end{defn}
 A function $\psi: M\to\R$ is called \textit{locally semiconvex} if for each $p\in M$ there is a neighborhood $O$ of $p$ in $M$ such that (\ref{E8}) holds true as soon as $\gamma(t) \in O$ ($t\in [0, 1]$); or equivalently if (\ref{E8}) holds for some fixed positive number $C$ which only depends on compact subset $K$ of $O$.

 Readers can refer \cite{Cannarsa_Sinestrari_book} for more information on the classic definitions and properties of semiconcave functions.

 The following definitions and results in this subsection are well known when the gradient is induced by the Riemannian metric $h$. In the Lorentzian case, these can be guaranteed by bilinearity and nondegeneracy of $g$.
\begin{defn}[\protect{\cite[Definition 3.1.6]{Cannarsa_Sinestrari_book}}, Dini gradient] Assume that $f\in C(M)$. A vector $V \in T_pM$ is called to be a \textit{subgradient} (resp. \textit{supergradient}) of $f$ at $p\in M$, if there exist a neighborhood $O$ of $p$ and a $C^1$ function $\phi: O \rightarrow \R$, differentiable at $p$, with $f(p)=\phi(p)$, $\phi(p) \leqslant f(p)$ (resp. $\phi(p) \geqslant f(p)$) for every $x \in O$ and $\nabla \phi(p)= V$.
\end{defn}
Suoopse $f: M\to\R$ and we denote by $\nabla^- f(p)$ the set of subgradients of $f$ at $p$ and $\nabla^+ f(p)$ the set of supergradients of $f$ at $p$. It is easy to check that $\nabla^{\pm} f(p)$ are closed convex sets (possibly empty). $\nabla^-f(p)$ and $\nabla^+ f(p)$ are both nonempty if and only if $f$ is differentiable at $p$; in this case,
\begin{align*}
	\nabla^-f(p)=\nabla^+f(p)=\{\nabla f(p)\}.
\end{align*}

Let ($\Liploc(M)$) $\Lip(M)$ be the set of (locally) Lipschitz functions on $M$ with respect to the Riemannian metric $h$. For a locally Lipschitz function, as it is differentiable almost everywhere, the following generlized gradient makes sense.
\begin{defn}[\protect{\cite[Definition 3.1.10]{Cannarsa_Sinestrari_book}}, Reachable gradient]
	Let $f\in\Liploc(M)$, then a vector $V\in T_p M$ is called a \textit{reachable gradient} if there exists a sequence $\{p_k\}\subset M\setminus\{p\}$, such that $f$ is differentiable at $p_k$ for each $k\in\mathbb{N}$, and
	\begin{align}\label{eq: reachable gradient}
	\lim_{k\to\infty}p_k=p,\,\,\,\,\lim_{k\to\infty}\nabla f(p_k)=V,
	\end{align}
	where, the first limit in \eqref{eq: reachable gradient} is taken in the sense of manifold topology on $M$; the second one is taken in the sense of any fixed chart that contains $p$.
	
	The set of all reachable gradients of $\phi$ at $p$ is denoted by $\nabla^*f(p)$.
\end{defn}
\begin{Rem}
	Since the first limit is taken, we know that when $k$ is sufficiently large, $p_k$ goes into that chart. The second limit does not depend on the choice of chart.
\end{Rem}
\begin{Pro}[\protect{\cite[Proposition 3.3.4]{Cannarsa_Sinestrari_book}}]\label{pro:semiconcave}\label{pro:co}
	 If $\psi$ is locally semiconvex (resp. semiconcave) on $M$, then $\psi\in\Liploc(M)$ and we have the following properties:
	 \begin{enumerate}[\rm (1)]
	   	\item  $\nabla^-\psi(p)\neq\varnothing$ (resp. $\nabla^+\psi(p)\neq\varnothing$) for any $p\in M$;
	 	\item $\nabla^-\psi(p)=\mathrm{co}\nabla^*\psi(p)$ (resp. $\nabla^+\psi(p)=\mathrm{co}\nabla^*\psi(p)$) for any $p\in M$, where $\mathrm{co}$ means the convex hull of a set;
	 	\item $\nabla^-\psi(p)$ (resp. $\nabla^+\psi(p)$) is a singleton if and only if $\psi$ is differentiable at $p$;
	 	\item if $\{p_k\}$ converges to $p$ and $V_k\in\nabla^-\psi(p_k)$ (resp. $V_k\in \nabla^+\psi(p_k)$) converges to $V$, then $V\in\nabla^-\psi(p)$ (resp. $V\in\nabla^+\psi(p)$);
	 	\item if $\nabla^-\psi(p)$ (resp. $\nabla^+\psi(p)$) is a singleton for all $p\in M$, then $\psi\in C^1(M)$.
	 \end{enumerate}
\end{Pro}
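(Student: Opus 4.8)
The notion of semiconvexity in \eqref{E8} is phrased entirely in terms of the auxiliary Riemannian metric $h$ (constant-speed $h$-geodesics and the $h$-distance $d_h$), so the corresponding statement for the \emph{$h$-gradient} is nothing but the classical \cite[Proposition 3.3.4]{Cannarsa_Sinestrari_book} (together with the Lipschitz regularity of semiconvex functions, also proved in \cite{Cannarsa_Sinestrari_book}): a locally semiconvex $\psi$ lies in $\Liploc(M)$, and items (1)--(5) hold with $\nabla$ replaced by $\nabla_h$ and with $\nabla^{\pm},\nabla^*$ replaced by the analogous sets $\nabla_h^{\pm},\nabla_h^*$ defined through $\nabla_h$. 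The plan is therefore to show that passing from $\nabla_h$ to the Lorentzian gradient $\nabla$ changes nothing, which is exactly where the ``bilinearity and nondegeneracy of $g$'' announced just before the definition of the Dini gradient enters.

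First I would record a pointwise comparison of the two gradients. For each $p\in M$ both $g_p$ and $h_p$ are nondegenerate bilinear forms on $T_pM$ ($g_p$ by the Lorentzian signature, $h_p$ by positive definiteness), so there is a unique linear map $\Phi_p\colon T_pM\to T_pM$ with $g_p(\Phi_p W,Z)=h_p(W,Z)$ for all $W,Z\in T_pM$, and $\Phi_p$ is an isomorphism; moreover $p\mapsto\Phi_p$ and $p\mapsto\Phi_p^{-1}$ are smooth bundle automorphisms of $TM$. For any $\phi$ that is $C^1$ near $p$ and differentiable at $p$, both $\nabla\phi(p)$ and $\Phi_p(\nabla_h\phi(p))$ satisfy $g_p(\,\cdot\,,Z)=\mathrm{d}\phi_p(Z)$ for all $Z$, whence by nondegeneracy of $g_p$
\[
\nabla\phi(p)=\Phi_p\bigl(\nabla_h\phi(p)\bigr).
\]
Since the families of test functions used to define sub/supergradients do not depend on which gradient one uses, this identity yields $\nabla^{\pm}f(p)=\Phi_p(\nabla_h^{\pm}f(p))$ for every $f\in C(M)$; and applying it along any sequence $p_k\to p$ of differentiability points, together with the continuity of $\Phi$, yields $\nabla^{*}f(p)=\Phi_p(\nabla_h^{*}f(p))$ for every $f\in\Liploc(M)$.

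With these three identities in hand, each assertion is obtained by transporting its Riemannian counterpart through the linear isomorphism $\Phi_p$: (1) follows because $\Phi_p(\nabla_h^-\psi(p))$ is nonempty; (2) because a linear isomorphism commutes with the convex-hull operation, so $\nabla^-\psi(p)=\Phi_p(\mathrm{co}\,\nabla_h^*\psi(p))=\mathrm{co}\,\Phi_p(\nabla_h^*\psi(p))=\mathrm{co}\,\nabla^*\psi(p)$; (3) because $\Phi_p$ is injective, so $\nabla^-\psi(p)$ is a singleton iff $\nabla_h^-\psi(p)$ is, iff $\psi$ is differentiable at $p$; (4) because if $p_k\to p$ and $\nabla^-\psi(p_k)\ni V_k\to V$ then $\Phi_{p_k}^{-1}V_k\in\nabla_h^-\psi(p_k)$ converges to $\Phi_p^{-1}V$ by continuity of $\Phi^{-1}$, hence $\Phi_p^{-1}V\in\nabla_h^-\psi(p)$ and so $V\in\nabla^-\psi(p)$; and (5) because if $\nabla^-\psi$ is everywhere a singleton then so is $\nabla_h^-\psi=\Phi^{-1}\nabla^-\psi$, giving $\psi\in C^1(M)$. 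The semiconcave statements follow by applying the semiconvex ones to $-\psi$ and using $\nabla^+f(p)=-\nabla^-(-f)(p)$ and $\nabla^*f(p)=-\nabla^*(-f)(p)$. The only genuine point requiring attention is the smoothness (hence continuity) of the bundle map $\Phi$: this is what legitimizes pushing the limits in the definition of the reachable gradient, and in item (4), back and forth between $g$ and $h$. Everything else is the classical result quoted verbatim for the Riemannian metric $h$, so I do not expect a serious obstacle.
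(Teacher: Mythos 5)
Your proposal is correct and takes essentially the same approach as the paper: the paper does not prove this proposition either, but simply cites \cite[Proposition 3.3.4]{Cannarsa_Sinestrari_book} and remarks that ``in the Lorentzian case, these can be guaranteed by bilinearity and nondegeneracy of $g$.'' Your smooth bundle automorphism $\Phi=(g^{\flat})^{-1}\circ h^{\flat}$, which intertwines $\nabla$ with $\nabla_h$ and hence all the Dini and reachable gradients, is exactly the device that makes that one-line remark rigorous.
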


The following lemma is a standard description about the gradients of a family of uniformly semiconcave functions and their limit functions.
\begin{Lem}[\protect{\cite[Lemma 2.1]{Cannarsa_Yu2009}}]\label{lem:uni-semiconcave}
	Let $\{\psi_k\}$ be a sequence of semiconcave functions on a bounded neighborhood $U$ with a uniform semiconcave constant $C$, $\{x_k\}\subset U$, $V_k\in\nabla^+\psi_k(x_k)$, $k\in\N$. If $x_k\to x$ and $\psi_k\rightrightarrows\psi$ as $k\to\infty$, then $\psi$ is semiconcave with constant $C$ and
	\begin{align*}
		\lim_{k\to\infty}d_E(V_k,D^+\psi(x))=0,
	\end{align*}where $d_E$ is the Euclidean distance. In particular, if $\psi_k$ is differentiable at $x_k$ and $\psi$ is differentiable at $x$, then $\lim_{k\to\infty}\nabla\psi_k(x_k)=\nabla\psi(x)$.
\end{Lem}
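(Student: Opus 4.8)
The plan is to fix once and for all a coordinate chart around $x$ (which, for $k$ large, contains the $x_k$ together with $x$ inside a compact subset of $U$) and to carry out all the estimates and limits in that chart, identifying tangent vectors with their coordinate representatives; this is the natural setting, since a priori the $V_k$ live in the different tangent spaces $T_{x_k}M$, as already noted in the Remark above. The essential analytic input is that a \emph{uniform} semiconcavity constant produces a \emph{uniform quadratic} upper bound: if $\phi$ is semiconcave with constant $C$ on $U$ and $W\in\nabla^+\phi(y)$, then in the fixed chart there is a constant $C'$ depending only on $C$ and the chart (hence on the relevant compact subset of $U$) such that
\[
\phi(z)\le\phi(y)+\langle W,z-y\rangle+\tfrac{C'}{2}|z-y|^2
\]
for all $z$ in a fixed neighborhood of $y$. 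This is the standard equivalence between the geodesic inequality \eqref{E8} and a one-sided quadratic bound; passing from the Riemannian geodesic form to the flat chart form only alters the constant by an amount controlled on compacta. In particular it holds for $\phi=\psi_k$, $y=x_k$, $W=V_k$, with $C'$ independent of $k$.

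First, that $\psi$ is semiconcave with constant $C$: writing \eqref{E8} for each $\psi_k$ along a fixed constant-speed geodesic contained in $U$ and letting $k\to\infty$, the uniform convergence $\psi_k\rightrightarrows\psi$ lets one pass the inequality to the limit, so $\psi$ inherits the constant $C$; by Proposition \ref{pro:semiconcave} then $\psi\in\Liploc(M)$ and $\nabla^+\psi(x)\ne\varnothing$ (here $\nabla^+\psi(x)$ is the superdifferential, written $D^+\psi(x)$ in the statement). Next, $\{V_k\}$ is bounded: uniform semiconcavity together with the uniform bound on $\sup|\psi_k|$ on a fixed compact set (a uniformly convergent sequence is uniformly bounded) bounds the local Lipschitz constants of the $\psi_k$, hence bounds the $V_k$. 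Suppose now, for contradiction, that $d_E(V_k,\nabla^+\psi(x))\not\to 0$; then there are $\varepsilon>0$ and a subsequence along which $d_E(V_k,\nabla^+\psi(x))\ge\varepsilon$, and by boundedness a further subsequence along which in addition $V_k\to V$, so $d_E(V,\nabla^+\psi(x))\ge\varepsilon$ and in particular $V\notin\nabla^+\psi(x)$ (which is closed). Letting $k\to\infty$ along this subsequence in $\psi_k(z)\le\psi_k(x_k)+\langle V_k,z-x_k\rangle+\tfrac{C'}{2}|z-x_k|^2$, using $x_k\to x$, $V_k\to V$, and $\psi_k\rightrightarrows\psi$, yields
\[
\psi(z)\le\psi(x)+\langle V,z-x\rangle+\tfrac{C'}{2}|z-x|^2
\]
in a neighborhood of $x$, i.e. $V\in\nabla^+\psi(x)$ --- a contradiction. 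Hence $d_E(V_k,\nabla^+\psi(x))\to 0$.

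The ``in particular'' clause is then immediate: if each $\psi_k$ is differentiable at $x_k$ then $V_k=\nabla\psi_k(x_k)$ is the unique element of $\nabla^+\psi_k(x_k)$, and if $\psi$ is differentiable at $x$ then $\nabla^+\psi(x)=\{\nabla\psi(x)\}$ by Proposition \ref{pro:semiconcave}(3), so $d_E(V_k,\nabla^+\psi(x))\to 0$ reads $\nabla\psi_k(x_k)\to\nabla\psi(x)$. The only genuine point of care --- the main obstacle, if one wants a name for it --- is the bookkeeping with the chart: one must check that the quadratic estimate holds with a $k$-independent constant $C'$ in one and the same chart around $x$, and that ``$V_k$ converges'' in the chart-independent sense used in the statement is correctly matched to coordinate convergence. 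Once the chart is fixed, everything conceptual reduces to the elementary limiting argument just described.
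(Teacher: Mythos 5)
The paper does not include a proof of this lemma; it simply cites Cannarsa--Yu. Your argument is correct and follows the standard route: establish $\psi$'s semiconcavity by passing~\eqref{E8} to the limit, translate uniform semiconcavity into a uniform one-sided quadratic bound in a fixed chart, use uniform semiconcavity plus uniform boundedness to bound the local Lipschitz constants and hence the $V_k$, and then extract a convergent subsequence and pass the quadratic inequality to the limit to land in $\nabla^+\psi(x)$, contradicting the assumed $\varepsilon$-separation. The one place that deserves an extra half-sentence is the passage to the limit in
$\psi_k(z)\le\psi_k(x_k)+\langle V_k,z-x_k\rangle+\tfrac{C'}{2}|z-x_k|^2$:
a priori the validity region of this inequality is a neighborhood of $x_k$, which moves with $k$. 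Since $x_k\to x$ and the constant $C'$ and the guaranteed radius of validity are uniform on a compact subset of $U$, one can fix a single ball around $x$ on which the inequality holds for all sufficiently large $k$ before taking the limit; you gesture at this (``in a fixed neighborhood of $y$'') but it is worth stating explicitly. With that clarification the proof is complete and matches the expected argument behind the cited result.
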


\subsection{Lorentzian eikonal equation and viscosity solution}
We recall the \textit{timelike  Lorentzian eikonal equation} with respect to the Lorentzian metric $g$:
\begin{align}\label{E1}\tag{E$_{g}$}
                 g(\nabla u,\nabla u)=-1.
\end{align}

Lorentzian eikonal equation plays an important role in the study of global Lorentzian geometry (see \cite{beem_global_1981, cui_negative_2014}). Similar to the Riemannian case, it is appropriate to introduce the concept of viscosity solution here, which could help ones better understand the behavior of timelike rays in the setting of Lorenzian geometry.
Following the idea originating from \cite[Definition 2]{Crandall_Evans_Lions1984}, we can state the definition of viscosity solution as below.
\begin{defn}[Viscosity solution of \eqref{E1}]
	$f\in C(M)$ is called a \textit{viscosity subsolution} of \eqref{E1} if for any $p\in M$, it satisfies
\begin{align*}
	             g(V, V)\leqslant -1,\,\,\,\,\,\forall\,V\in\nabla^+f(p).
\end{align*}
Similarly, $f\in C(M)$ is called a \textit{viscosity supersolution} of \eqref{E1} if for any $p\in M$, we have
\begin{align*}
             g(V, V)\geqslant -1,\,\,\,\,\,\forall\,V\in \nabla^- f(p).
\end{align*}
If $f$ satisfies both of the properties above, we call it a \textit{viscosity solution} of equation \eqref{E1}.
\end{defn}
We would naturally consider the following set
\begin{align*}
\mathscr{S}(M):=\left\{f\in\Liploc(M): f \mbox{ is global viscosity solution to \eqref{E1}}\right\}.
\end{align*}
%\begin{defn}[\protect{\cite[Definition 3.1.10]{Cannarsa_Sinestrari_book}}, Reachable gradient]
%	Let $f\in\Liploc(M)$, then a vector $V\in T_p M$ is called a \textit{reachable gradient} if there exists a sequence $\{p_k\}\subset M\setminus\{p\}$ with $p_k\to p$ as $k\to\infty$, such that $f$ is differentiable at $p_k$ for each $k\in\mathbb{N}$, and $\lim_{k\rightarrow\infty} \nabla f(p_k)=V$.
%	
%	Here, the first limit is taken in the sense of manifold topology on $M$; the second one is taken in the sense of any fixed chart that contains $p$. Since the first limit is taken, we know that when $k$ is sufficiently large, $p_k$ goes into that chart. The second limit does not depend on the choice of chart. The set of all reachable gradients of $\phi$ at $p$ is denoted by $\nabla^*f(p)$.
%\end{defn}
In \cite{zhu2023global}, $\mathscr{S}(M)$ has been studied systematically. The first result is $\mathscr{S}(M)\neq\varnothing$, i.e., any globally hyperbolic space-time admits at least one globally defined locally Lipschitz viscosity solution. Moreover, by defining the time orientation of viscosity solution, the set $\mathscr{S}(M)$ can be divided into some subclasses. The definition of the time orientation is defined as follows.
\begin{defn}For $f\in\Lip(M)$ where $\nabla f$ is timelike as long as it exists:
\begin{enumerate}[\rm (1)]
	\item The time orientation of $f$ is called \textit{future-directed} at $x$, if any limiting gradient $V\in \nabla^* f(x)$ is  future-directed timelike; the time orientation of $f$ is \textit{past-directed} at $x$ if any limiting gradient $V\in \nabla^* f(x)$ is past-directed timelike;
\item We say the time orientation of $f$ changes at $x$ if there exist two timelike vectors $V, W$ in $\nabla^* f(x)$ such that $V$ is past-directed and $W$ is future-directed;
\item We call the time orientation of $f$ is consistent if the time orientation of $f$ is either future-directed at arbitrary $x\in M$ or past-directed at arbitrary $x\in M$. Otherwise, we say the time orientation of $f$ is non-consistent.
\end{enumerate}
\end{defn}
When the time orientation of viscosity solution of equation \eqref{E1} is consistent, we have the following properties in the view of weak KAM theory. Recall that a future-directed (resp. past-directed) inextendible timelike curve (see \cite[Page 61]{beem_global_1981}) $\gamma:[0,T)\to M$ is called a \textit{ray} if $d(\gamma(a),\gamma(b))=b-a$ (resp. $d(\gamma(b),\gamma(a))=b-a$) for all $0\leqslant a\leqslant b<T$.
\begin{The}[\protect{\cite[Theorem 3]{zhu2023global}}]\label{thm:1}
Suppose $u\in\mathscr{S}(M)$, the time orientation of which is always past-directed, then
\begin{enumerate}[\rm (1)]
	\item  $u$ is differentiable at $x\in M$ if and only if there exists a unique future-directed timelike ray $\gamma_x: [0, T)\rightarrow M$ that satisfies
\begin{align}\label{E15}
          \gamma_x(0)=x,\,\,\,u(\gamma_x(t))=u(x)-t,\,\,\,\,\forall\,t\in [0 ,T)
\end{align}
and
\begin{align}\label{E16}
         \dot{\gamma_x}(0)=-\nabla u(x),
\end{align}
where $T$ is the maximal existence time of $\gamma_x$;
  \item for any $x\in M$, $u$ is differentiable at any $\gamma_x(t)$ for $t\in (0, T)$ and $\dot{\gamma_x}(t)=-\nabla u(\gamma_x(t))$.
\end{enumerate}
\end{The}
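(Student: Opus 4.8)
The plan is to run the Lorentzian analogue of Fathi's weak KAM description of calibrated curves, with the Lorentzian distance $d$ in place of the action and maximizing timelike geodesics in place of minimizers, using global hyperbolicity where superlinearity/completeness are used in the Riemannian theory. Two facts drive everything. The first is the \emph{domination inequality} $u(x)-u(y)\ge d(x,y)$ for all $x\le y$, which restates the viscosity supersolution property: along a future-directed timelike curve $\sigma$ one differentiates $u\circ\sigma$ at a.e.\ parameter and applies the Lorentzian reverse Cauchy--Schwarz inequality to $\dot\sigma$ against the (past-directed, unit, timelike) reachable gradients of $u$ to get $\frac{d}{ds}(u\circ\sigma)\le-|\dot\sigma|$, then integrates and takes the supremum over $\sigma$ from $x$ to $y$. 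The second is the \emph{existence of calibrated rays}: for every $p\in M$ there is a future-directed timelike ray $\gamma_p\colon[0,T)\to M$ with $\gamma_p(0)=p$ and $u(\gamma_p(t))=u(p)-t$ on $[0,T)$ (equivalently $d(\gamma_p(s),\gamma_p(t))=t-s$ for $0\le s\le t<T$); this is the Lorentzian Lax--Oleinik/Hopf statement, proved from the envelope identity $u(p)=\sup_{y\ge p}\{u(y)+d(p,y)\}$ (whose ``$\ge$'' half is domination and whose ``$\le$'' half is where the subsolution inequality enters) by extracting an inextendible limit curve, via the limit curve theorem, from the arclength-parametrized maximizing geodesics between $p$ and near-maximizers $y_k$, additivity of $d$ along them plus domination forcing the limit to be calibrated. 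I would recall this from \cite{zhu2023global}. Lastly, the Hessian comparison of Lemma \ref{lemLL} enters only in the soft form that $d(p,\cdot)$ is locally semiconvex on $I^+(p)$ and, dually, $d(\cdot,q)$ is locally semiconvex on $I^-(q)$, so these functions have nonempty subdifferentials everywhere and Proposition \ref{pro:co} applies to them; this is what lets one bypass smoothness arguments near the Lorentzian cut locus.

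The common core of (1) and (2) is a test-function lemma. If $\gamma\colon[0,T)\to M$ is a calibrated ray and $0\le t_0<t_1<T$, put $q:=\gamma(t_1)$; then domination together with the calibration $u(\gamma(t_0))=u(q)+(t_1-t_0)=u(q)+d(\gamma(t_0),q)$ shows that $u(q)+d(\cdot,q)$ is a lower support for $u$ at $\gamma(t_0)$, and by Lemma \ref{lemLL} it is locally semiconvex, so $\varnothing\ne\nabla^-\big(d(\cdot,q)\big)(\gamma(t_0))\subseteq\nabla^-u(\gamma(t_0))$. Symmetrically, if $t_0>0$, then $u(\gamma(0))-d(\gamma(0),\cdot)$ is a locally semiconcave upper support for $u$ at $\gamma(t_0)$, giving $\nabla^+u(\gamma(t_0))\ne\varnothing$.

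For the forward implication of (1), let $u$ be differentiable at $x$; then $\nabla u(x)$ is past-directed unit timelike and $\nabla^-u(x)=\{\nabla u(x)\}$. Taking any calibrated ray $\gamma$ from $x$ and $q=\gamma(t_1)$, $t_1\in(0,T)$, the lower support forces $d(\cdot,q)$ to be differentiable at $x$ with $\nabla\big(d(\cdot,q)\big)(x)=\nabla u(x)$; hence the maximizing geodesic from $x$ to $q$ is unique, and the first variation formula for $d$ identifies its initial velocity, yielding $\dot\gamma(0)=-\nabla u(x)$. A geodesic being determined by its initial velocity, $\gamma$ is the unique calibrated ray from $x$, with exactly the asserted properties. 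For the converse, suppose the calibrated ray $\gamma_x$ from $x$ is unique; pick differentiability points $p_k\to x$ (dense, as $u$ is locally Lipschitz) with $\nabla u(p_k)\to V\in\nabla^*u(x)$. By the forward implication the calibrated ray from $p_k$ has initial velocity $-\nabla u(p_k)$, and by the limit curve theorem in the globally hyperbolic $M$ — with continuous dependence of geodesics on initial data and continuity of $u$ and $d$ — these rays subconverge to a calibrated ray from $x$ with initial velocity $-V$; uniqueness forces it to be $\gamma_x$, so $V=-\dot\gamma_x(0)$ for every such sequence, i.e.\ $\nabla^*u(x)=\{-\dot\gamma_x(0)\}$. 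A locally Lipschitz function whose set of reachable gradients at a point is a single vector is differentiable there (its convex hull is the Clarke generalized gradient, which reduces to a point exactly when the function is strictly, hence a fortiori, differentiable). This proves (1); and (2) follows at once, since for $t_0\in(0,T)$ the point $\gamma_x(t_0)$ carries both a lower support (with $q=\gamma_x(t_1)$, $t_1\in(t_0,T)$) and the upper support $u(x)-d(x,\cdot)$, so $\nabla^\pm u(\gamma_x(t_0))$ are both nonempty, $u$ is differentiable there, and the lower support again pins down $\nabla u(\gamma_x(t_0))=-\dot\gamma_x(t_0)$.

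The step I expect to be the real obstacle is the existence and inextendibility of the calibrated rays: the compactness it needs — the limit curve theorem and the existence of maximizing geodesics between causally related points — is precisely what global hyperbolicity supplies in place of the compactness superlinear Lagrangians give for free, and it is also the only place where the subsolution inequality is used in an essential rather than bookkeeping way, through the ``$\le$'' half of the envelope identity. If one does not import it from \cite{zhu2023global}, that is where the work concentrates; the remaining delicacy — handling $d(p,\cdot)$ and $d(\cdot,q)$ across their cut loci — is exactly what the one-sided Hessian bound of Lemma \ref{lemLL} disposes of.
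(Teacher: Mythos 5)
This theorem is quoted verbatim from \cite[Theorem 3]{zhu2023global}; the present paper states it as a preliminary and gives no proof, so there is no in-paper argument to compare your sketch against. Judged on its own terms, your outline follows the natural Lorentzian weak KAM route (domination plus calibrated rays, one-sided supports built from $d(\cdot,q)$ and $d(\gamma(0),\cdot)$ via the Hessian comparison of Lemma~\ref{lemLL}, semiconcavity through Proposition~\ref{pro:co}, and uniqueness of $\nabla^* u(x)$ forcing differentiability), and the division of labour --- importing existence of calibrated rays from \cite{zhu2023global} while handling the pointwise bookkeeping directly --- is sensible and would reconstruct the argument.

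There are, however, two genuine sign slips that you should fix, one inherited from a typo in the paper itself. First, equation~\eqref{E15} as printed, $u(\gamma_x(t))=u(x)-t$, is inconsistent with the past-directed time orientation, with~\eqref{E16}, and with the paper's own calibration identity~\eqref{A5}: if $\nabla u(x)$ is past-directed timelike and $\dot\gamma_x(0)=-\nabla u(x)$, then $\frac{d}{dt}u(\gamma_x(t))|_{t=0}=g(\nabla u(x),\dot\gamma_x(0))=-g(\nabla u(x),\nabla u(x))=+1$, so the correct calibration is $u(\gamma_x(t))=u(x)+t$. You took \eqref{E15} at face value and so wrote domination as $u(x)-u(y)\ge d(x,y)$ and the envelope identity as a supremum; with past-directed orientation they are $u(y)-u(x)\ge d(x,y)$ and $u(x)=\inf_{y\ge x}\{u(y)-d(x,y)\}$ (the latter is exactly Theorem~\ref{the7}(2)). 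Second, your derivation of domination is internally inconsistent and mis-attributes it: for past-directed unit timelike $V\in\nabla^* u$ and future-directed $\dot\sigma$, reverse Cauchy--Schwarz gives $g(V,\dot\sigma)\ge|V|\,|\dot\sigma|=|\dot\sigma|>0$, not $\le-|\dot\sigma|$; and the a.e.\ bound $|\nabla u|\ge1$ that makes this work comes from the viscosity \emph{subsolution} inequality ($g(V,V)\le-1$ on $\nabla^+u$), not the supersolution. In the weak KAM dictionary, domination is equivalent to the subsolution property, while the supersolution property is what guarantees the existence of the calibrated rays. These corrections do not change the architecture of your proof, but as written the domination step does not go through and the roles of the two viscosity inequalities are swapped.
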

Inspired by the work above, this paper continues to investiage the well-posedness of Cauchy type problem for a timelike Lorentzian eikonal equation. To pose the Cauchy type problem exactly, we introduce a preliminary concept.

\begin{defn}[Cauchy Surface]\label{Def2}
 A \textit{Cauchy surface} $\Gamma$ is a subset of $M$ for which every inextendible causal curve intersects exactly once.
\end{defn}

By \cite[Theorem 3]{bernard_lyapounov_2018}, there exists a \textit{steep temporal function} $\tau : M \to \R$ such that $\tau$ is strictly increasing along any future-directed causal curve and
\begin{equation}\label{E2}
|d \tau (V)|\geqslant \max\left\{\sqrt{-g(V, V)},|V|_h\right\}
\end{equation}
for all timelike vector $V \in TM$. Moreover, Corollary 1.8 in \cite{bernard_lyapounov_2018} implies that each level set $\tau_s:=\{p\in M| \tau(p)=s\}$  is a Cauchy surface.

Hereafter, we always use $\Gamma\subset M$ to represent a given Cauchy surface.
Now one can consider a \textit{Cauchy type problem for the timelike Lorentzian eikonal equation} on $M$ as
\begin{align}\label{A1}\tag{EC$_g$}
	\left\{
	\begin{array}{ll}
		g(\nabla u(x), \nabla u(x))=-1, & x\in I^-[\Gamma],\\
		u(x)=\phi(x),& x\in \Gamma,
	\end{array}
	\right.
\end{align}
where $\phi\in\Liploc(\Gamma)$.
%Here, we also define
%\begin{align*}
%	\mathscr{D}_\phi(I^-[\Gamma])&:=\left\{f\in\Liploc(M): f \mbox{ is global viscosity solution to \eqref{A1}}\right\},\\
%	\mathscr{D}_\phi^-(I^-[\Gamma])&:=\left\{f\in\mathscr{D}_\phi(I^-[\Gamma]): \text{ the time orientation of }u\text{ is always past-directed}\right\}
%\end{align*}

\section{Well-posedness of \eqref{A1}}
\subsection{Existence of the viscosity solutions}\label{sec:existence}
For brevity, as $x\in I^-[\Gamma]$, we set
\begin{align*}
    I^+_{\Gamma}(x)&:=\Gamma\cap I^+(x)=\{y\in\Gamma:x\ll y\},\\
	J^+_{\Gamma}(x)&:=\Gamma\cap J^+(x)=\{y\in\Gamma:x\leq y\}.
\end{align*}
Suppose $\phi\in\Lip(\Gamma)$. For any $x\in I^-[\Gamma]$, consider the function
\begin{align}\label{eq:u_phi}
u_\phi(x):=\inf_{y\in J^+_{\Gamma}(x)}\{\phi(y)-d(x,y)\}.
\end{align}
The following lemma establishes the existence of minimizer of equation \eqref{eq:u_phi}.
\begin{Lem}
	\label{lem1}
For any $x\in I^-[\Gamma]$, there exists $y_x\in \Gamma$ such that
\begin{align}\label{E4}
u_\phi(x)=\phi(y_x)-d(x,y_x).
\end{align}
In other words, $y_x$ is the minimizer of \eqref{eq:u_phi} and the infimum is actually a minimum.
\end{Lem}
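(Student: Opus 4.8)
The plan is to show that the infimum defining $u_\phi(x)$ is attained by exploiting global hyperbolicity, which makes the relevant causal region compact, together with lower/upper semicontinuity of the ingredients. First I would fix $x\in I^-[\Gamma]$ and take a minimizing sequence $\{y_k\}\subset J^+_\Gamma(x)$, i.e. $y_k\in\Gamma$, $x\leq y_k$, and $\phi(y_k)-d(x,y_k)\to u_\phi(x)$. The key point is to confine this sequence to a compact set. Pick any point $q\in I^+[\Gamma]$ with $y_k \in J^-(q)$ for all $k$ — for instance, using the steep temporal function $\tau$ from \eqref{E2}, choose $q$ on a level set $\tau_s$ with $s$ strictly larger than $\tau$ at any single auxiliary reference point on $\Gamma$ reachable from $x$; since every $y_k\in\Gamma$ lies on the Cauchy surface and $x\ll$ some point of $\Gamma$ above $q$'s level along the integral curve of a timelike field through $x$, one arranges $y_k\in J^+(x)\cap J^-(q)$. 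By global hyperbolicity $J^+(x)\cap J^-(q)$ is compact, so after passing to a subsequence $y_k\to y_x$ for some $y_x\in M$.

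Next I would check that the limit $y_x$ actually lies in $J^+_\Gamma(x)$. Since $\Gamma$ is a Cauchy surface it is a closed subset of $M$ (being, by \cite{bernard_lyapounov_2018}, a level set of a continuous temporal function), so $y_x\in\Gamma$. The causal relation $\leq$ is closed on a globally hyperbolic space-time, hence $x\leq y_k$ for all $k$ forces $x\leq y_x$, giving $y_x\in J^+_\Gamma(x)$. Then I pass to the limit in $\phi(y_k)-d(x,y_k)$: the function $\phi\in\Lip(\Gamma)$ is continuous, so $\phi(y_k)\to\phi(y_x)$, and the Lorentzian distance $d(x,\cdot)$ is continuous (indeed upper semicontinuous in general, and continuous on globally hyperbolic space-times), so $d(x,y_k)\to d(x,y_x)$. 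Therefore
\begin{align*}
u_\phi(x)=\lim_{k\to\infty}\bigl(\phi(y_k)-d(x,y_k)\bigr)=\phi(y_x)-d(x,y_x)\geq \inf_{y\in J^+_\Gamma(x)}\{\phi(y)-d(x,y)\}=u_\phi(x),
\end{align*}
so equality holds throughout and $y_x$ is a minimizer.

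The main obstacle I anticipate is the compactness step: one must produce, for the given $x$, a single point $q$ whose causal past contains the entire minimizing sequence $\{y_k\}\subset\Gamma$. This is where the Cauchy surface structure and the steep temporal function are essential — without a uniform upper barrier the points $y_k$ could run off to infinity along $\Gamma$. The cleanest route is: let $\sigma:=\tau(x)$ and note that any $y\in\Gamma\cap J^+(x)$ satisfies $\tau(y)\geq\tau(x)$; but this alone does not bound $y$ from above, so instead one should first restrict to the tail of the minimizing sequence where $\phi(y_k)-d(x,y_k)$ is bounded, use that $d(x,y_k)\geq 0$ together with a lower bound for $\phi$ coming from its Lipschitz estimate against $d_h(x,\cdot)$ and the inequality \eqref{E2} relating $d_h$-length to $\tau$-increments, to conclude $\tau(y_k)$ is bounded above; then $y_k$ lies in $J^+(x)\cap \tau^{-1}((-\infty,c])$, and the latter is contained in $J^+(x)\cap J^-(q)$ for suitable $q$ on a high level set, which is compact. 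Once compactness is secured the rest is routine continuity/closedness bookkeeping; I would also remark, for later use, that the argument shows $y_x$ need not be unique but that any choice will serve in the definition \eqref{E4}.
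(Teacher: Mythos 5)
Your overall strategy---secure compactness of the region where the minimizing sequence lives, then invoke continuity and closedness to extract a minimizer---matches the spirit of the paper's proof, but the specific compactness argument you propose has a genuine gap, and the paper handles it quite differently. The paper shows directly that the entire set $J^+_\Gamma(x)$ is compact: it is the image of the compact set $U_hT_xM\cap\mathscr{C}_x$ of $h$-unit future causal directions at $x$ under the continuous map $T_{\Gamma,x}$ that sends a direction $V$ to the unique point where the corresponding future-directed inextendible causal geodesic from $x$ meets the Cauchy surface $\Gamma$. Once $J^+_\Gamma(x)$ is known to be compact, the infimum of the continuous function $\phi(\cdot)-d(x,\cdot)$ over it is attained, and there is no need for a minimizing-sequence argument at all.

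The step in your proof that fails is the attempt to bound $\tau(y_k)$ from above. You argue from the facts that $\phi(y_k)-d(x,y_k)$ is bounded (say $\leqslant C$), that $d(x,y_k)\geqslant 0$, and that $\phi$ has a Lipschitz lower bound $\phi(y_k)\geqslant \phi(y_0)-L\,d_h(y_0,y_k)$. Combining these yields
\begin{align*}
\phi(y_0)-L\,d_h(y_0,y_k)\ \leqslant\ \phi(y_k)\ \leqslant\ C+d(x,y_k),
\end{align*}
i.e., $L\,d_h(y_0,y_k)+d(x,y_k)\geqslant \phi(y_0)-C$, which is a \emph{lower} bound on a combination of distances and provides no obstruction whatsoever to $y_k$ escaping to infinity along $\Gamma$. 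Both $\phi(y_k)$ and $d(x,y_k)$ could grow without bound while their difference stays bounded, so the minimizing condition alone cannot confine $\{y_k\}$; the confinement must come from the geometry, namely the compactness of $J^+_\Gamma(x)$ itself, which is exactly what the paper establishes. A secondary issue: your inclusion $J^+(x)\cap\tau^{-1}((-\infty,c])\subset J^+(x)\cap J^-(q)$ for a single point $q$ is also not correct in general; the right statement is that this set is contained in $J^+(x)\cap J^-[\tau^{-1}(c)]$, which is compact by the standard theorem on causal futures of compact sets meeting a Cauchy surface---but this only becomes relevant once a bound on $\tau(y_k)$ is secured, which your argument does not provide. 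The remainder of your proof (closedness of $\Gamma$, closedness of $\leq$, continuity of $\phi$ and of $d$ on a globally hyperbolic space-time) is sound, but it is downstream of the broken compactness step.
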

\begin{proof}
	We reparametrize all future-directed inextendible causal geodesics by $h$-arclength, then these geodesics satisfy a system of second order ordinary differential equations. Since $\Gamma$ is a Cauchy surface, any future-directed  inextendible causal curve $\gamma$ with $\gamma(0)=x$ must strike $\Gamma$ at a unique point. Therefore, we can define a map $T_{\Gamma,x}$ depends on $x$ as
	\begin{align*}
		T_{\Gamma,x}:U_hT_xM\cap\mathscr{C}_x&\to\Gamma\\
		V&\mapsto \gamma_{x,V}\cap\Gamma,
	\end{align*}where $U_hT_xM$ is the unit tangent sphere at $x$ with respect to Riemannian metric $h$, $\mathscr{C}_x$ is the future-pointing light cone at $x$ and $\gamma_{x,V}$ is a future-directed  inextendible causal curve with $\gamma(0)=x$ and $\dot\gamma(0)=V$. Applying the continuous dependence on the initial condition of ODE, it is easy to see that $T_{\Gamma,x}$ is continuous. Since the $U_hT_xM\cap \text{int}\,(\mathscr{C}_x)$ is diffeomorphic to an open ball in Euclidean space,
\begin{align*}
	J^+_{\Gamma}(x)=T_{\Gamma,x}(U_hT_xM\cap\mathscr{C}_x)=\overline{T_{\Gamma,x}(U_hT_xM\cap \mathrm{int}\,(\mathscr{C}_x))}=\overline{I^+_{\Gamma}(x)}
\end{align*}
is compact. Then there must be a point $y_x$ which minimizes $\phi(\cdot)-d(x,\cdot)$.
\end{proof}
\begin{defn}[Forward calibrated curve]
	For any fixed $u\in\mathscr{S}(M)$, a future-directed timelike curve $\alpha:[0,t]\rightarrow M$ is said to be a \textit{forward calibrated curve} of $u$ if $u(\alpha(0))=u(\alpha(s))-s$ for any $s\in [0,t]$.
\end{defn}
For any $x\in I^-[\Gamma]$, we always use $y_x$ to denote the minimizer of \eqref{eq:u_phi} . Moreover, from \eqref{E4}, we can obtain a future-directed timelike curve ${\gamma}_x$ with ${\gamma}_x(0)=x$, ${\gamma}_x(d(x,y_x))=y_x$ and
\begin{align}\label{A5}
u_\phi({\gamma}_x(t))-u_\phi({\gamma}_x(0))=t
\end{align}
for any $t\in [0, d(x, y_x)]$. Indeed, ${\gamma}_x$ is a forward calibrated curve for $u_\phi$ by definition.
In the rest of this paper, we always use $y_x$ and ${\gamma}_x$ to denote a minimizer of equality \eqref{eq:u_phi} and forward calibrated curve of $u_\phi$ at $x$.

With the help of Lemma \ref{lem1}, we can study the regularity of $u_\phi$. Actually, we will show that $u_\phi$ is a locally semiconcave function on $I^-(\Gamma)$.  As a preparation, we need to introduce the following concepts.
\begin{defn}
	[Upper support function]\label{Def2}
 Let $x\in I^-[\Gamma]$ and $\gamma$ be a future-directed timelike maximal geodesic with $\gamma(0)=x$ and $\gamma(d(x,y_x))=y_x$. For any fixed $p\in \gamma$ and a suitable neighborhood $O$ of $x$ satisfying $O \subset I^-(p)$, we call the function
\begin{align}\label{E5}
u_{\phi,x}: O&\to [-\infty,+\infty] \nonumber\\
z&\mapsto\phi(y_x)-d(z,p)-d(p,y_x).
\end{align}
an \textit{upper support function} of $u_\phi$ at $x$.
\end{defn}
\begin{Rem}
	Actually, for any choice of $p\in\gamma$,
	\begin{align*}
		u_{\phi,x}(z)&=\phi(y_x)-d(z,p)-d(p,y_x)\geqslant \phi(y_x)-d(z, y_x)\geqslant \phi(y_z)-d(z, y_z)=u_\phi(z)
	\end{align*}
	for any $z\in O$ and the equality holds when $z=x$. Readers may wonder about the dependence of $u_{\phi,x}$ on $p$ here, but the choice of $p$ does not prevent the following conclusions.
\end{Rem}
Relying on the upper support function, we can study the semiconcavity of $u_\phi$.
\begin{Lem}[\protect{\cite[Lemma 3.2]{andersson_cosmological_1998}}]\label{lem9}
	Let $U\subset\R^n$ be an open convex domain and $f\in C(U)$. Also, assume that there exists some $C>0$ such that for any $p\in U$, $f_p$, the upper support function of $f$ at $p$, satisfies $D^2f_p\leqslant CI$ in its domain of definition, then $f-\frac{C}{2}\|\cdot\|_E^2$ is concave in $U$, which means $f$ is semiconcave with constant $C$, where $\|\cdot\|_E$ denotes the Euclidean norm on $\R^n$.
\end{Lem}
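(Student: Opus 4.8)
The plan is to subtract the quadratic $\frac{C}{2}\|\cdot\|_E^2$ from $f$, to upgrade the second‑order upper support hypothesis into an \emph{affine} local upper support at every point, and then to prove concavity by reducing to a one–dimensional statement along line segments.

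Set $g:=f-\frac{C}{2}\|\cdot\|_E^2$; the goal is to show $g$ is concave on $U$, which is exactly the claim that $f$ is semiconcave with constant $C$. For the upgrade: given $p\in U$, let $f_p$ be the $C^2$ upper support function supplied by the hypothesis, defined on a ball $O_p\ni p$, so that $f_p\ge f$ on $O_p$, $f_p(p)=f(p)$, and $D^2f_p\le CI$ on $O_p$. Put $g_p:=f_p-\frac{C}{2}\|\cdot\|_E^2$. Then $D^2g_p=D^2f_p-CI\le 0$, so $g_p$ is concave on the convex set $O_p$ and therefore lies below its own tangent plane at $p$, i.e. $g_p(x)\le g_p(p)+\nabla g_p(p)\cdot(x-p)$ for $x\in O_p$. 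Combining this with $g\le g_p$ on $O_p$ (since $f_p\ge f$) and $g(p)=g_p(p)$ yields $g(x)\le g(p)+\nabla g_p(p)\cdot(x-p)$ for $x$ near $p$; that is, $g$ has an affine local upper support at every point of $U$.

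Next I would reduce to dimension one. As $U$ is convex, $g$ is concave on $U$ iff its restriction to every segment is concave. Fix $x_0,x_1\in U$, set $x_t:=(1-t)x_0+tx_1$, and consider the concavity defect $\phi(t):=g(x_t)-(1-t)g(x_0)-tg(x_1)$ on $[0,1]$. It is continuous with $\phi(0)=\phi(1)=0$, and restricting the affine local upper support of $g$ at an interior point $x_{s}$ to the segment produces an affine local upper support of $\phi$ at $s\in(0,1)$. So the whole lemma reduces to the claim: \emph{if $\phi\in C([0,1])$ satisfies $\phi(0)=\phi(1)=0$ and has an affine local upper support at every interior point, then $\phi\ge 0$ on $[0,1]$.}

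I expect this last claim to be the only place where an actual idea is needed rather than bookkeeping. To prove it, suppose $-m:=\min_{[0,1]}\phi<0$, attained at some $s^*$; since $\phi$ vanishes at the endpoints, $s^*\in(0,1)$. The affine upper support at $s^*$ reads $\phi(s)\le -m+b(s-s^*)$ for $s$ near $s^*$, while $\phi\ge -m$ everywhere because $-m$ is the global minimum; letting $s$ tend to $s^*$ from the right and then from the left forces $b=0$, hence $\phi\equiv -m$ on a whole neighborhood of $s^*$. Setting $s_1:=\sup\{s\in[s^*,1]:\phi\equiv -m\text{ on }[s^*,s]\}$ and applying the same argument at $s_1$ (again a global minimizer) unless $s_1=1$, we conclude $s_1=1$, so $\phi(1)=-m\neq 0$, a contradiction. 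Therefore $\phi\ge 0$, so $g$ is concave on $U$, and $f=g+\frac{C}{2}\|\cdot\|_E^2$ is semiconcave with constant $C$. The only routine care needed throughout is to take each $O_p$ convex (e.g. a ball) so that ``a $C^2$ function with $D^2\le 0$ lies below its tangent plane'' is legitimate.
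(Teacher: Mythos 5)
The paper cites this lemma from Andersson, Galloway and Howard (\cite[Lemma 3.2]{andersson_cosmological_1998}) without reproducing a proof, so there is no in-paper argument to compare against; I will therefore assess your proof on its own merits.

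Your argument is correct and, to my mind, a clean elementary route to the result. The reduction $g:=f-\frac{C}{2}\|\cdot\|_E^2$ turns each $C^2$ upper support with $D^2 f_p\leqslant CI$ into a concave $C^2$ upper support $g_p$, and the tangent-plane inequality for $g_p$ on a convex neighborhood gives $g$ an affine local upper support at every point. The one-dimensional restriction via $\phi(t)=g(x_t)-(1-t)g(x_0)-tg(x_1)$ is precisely the right object, and you correctly observe that the affine support of $g$ at $x_s$ restricts to an affine support of $\phi$ at $s$ because the inner product with $x_t-x_s=(t-s)(x_1-x_0)$ is affine in $t$. The 1D finish is the only place where a real idea is needed, and your argument is sound: at an interior global minimizer the slope of the affine support must vanish (sandwich between the global bound $\phi\geqslant-m$ and the support $\phi\leqslant -m+b(s-s^*)$ on both sides), which forces $\phi\equiv -m$ locally; then the ``propagate to the endpoint'' step is a standard open-and-closed / supremum argument that yields $\phi(1)=-m$ and contradicts $\phi(1)=0$. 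You also correctly flag the only technical hygiene needed, namely shrinking each domain of $f_p$ to a convex ball so that ``concave $\Rightarrow$ below tangent plane'' applies. The original reference proves the same statement, and the structure of the standard argument is the same two-step reduction (subtract the quadratic, then reduce to one variable); your write-up is a faithful and self-contained rendition of it.
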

\begin{Pro}\label{pro:semiconcave}
	$u_\phi$ is locally semiconcave on $I^-[\Gamma]$.
\end{Pro}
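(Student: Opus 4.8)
The plan is to verify the hypotheses of Lemma \ref{lem9} in suitable local coordinates, so that local semiconcavity of $u_\phi$ follows from the bound $D^2 u_{\phi,x} \leqslant CI$ on the upper support functions. Since semiconcavity is a local notion, I would fix an arbitrary point $x_0 \in I^-[\Gamma]$ and work in a small coordinate chart $U$ around $x_0$ which I may take to be an open convex Euclidean ball; it suffices to produce a constant $C = C(U)$ such that for every $x \in U$ the upper support function $u_{\phi,x}$ from Definition \ref{Def2} satisfies $D^2 u_{\phi,x} \leqslant CI$ on its domain. Continuity of $u_\phi$ on $I^-[\Gamma]$ — needed to apply Lemma \ref{lem9} — should be argued first, e.g.\ from the representation \eqref{eq:u_phi}, the continuity of $\phi$, $d(\cdot,\cdot)$, and the map $T_{\Gamma,x}$ from Lemma \ref{lem1}, together with compactness of $J^+_\Gamma(x)$; alternatively it is a byproduct of the semiconcavity estimate itself.

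The heart of the argument is the Hessian bound. Recall $u_{\phi,x}(z) = \phi(y_x) - d(z,p) - d(p,y_x)$ for $p$ on the maximal timelike geodesic $\gamma$ from $x$ to $y_x$, and that $\phi(y_x)$ and $d(p,y_x)$ are constants in $z$. Thus $D^2 u_{\phi,x}(z) = -D^2\big(d(z,p)\big)$, and I need a \emph{lower} bound on the Hessian of the Lorentzian distance function $z \mapsto d(z,p)$ near $z = x$, where $x \ll p$. This is exactly where Lemma \ref{lemLL} enters: on the relatively compact chart $U$ the sectional curvatures of timelike planes are bounded above by some $c \in \R$, so for $V$ spacelike and orthogonal to $\nabla d(\cdot,p)$ one gets $(\mathbf{Hess}\, d(\cdot,p))(V,V) \geqslant -f_c(d(\cdot,p))\langle V,V\rangle$ (or the alternative bound \eqref{A21}), hence an upper bound on $-\mathbf{Hess}\, d$. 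Along the gradient direction $\nabla d(\cdot,p)$ the second derivative of $d(\cdot,p)$ vanishes (since $d(\cdot,p)$ satisfies the eikonal equation and its integral curves are unit-speed timelike geodesics), and in the mixed and remaining directions the Hessian is controlled similarly; by shrinking $p$ toward $x$ along $\gamma$ if necessary one keeps $d(z,p)$ bounded away from $0$ and, in the $c<0$ case, one uses \eqref{A21} when $d \geqslant \pi/\sqrt{-c}$, so $f_c(d(z,p))$ stays bounded by a constant depending only on $U$. Converting this Riemannian-metric-$h$ Hessian bound into a bound on the Euclidean Hessian $D^2 u_{\phi,x}$ in the chart costs only another constant (controlled by the Christoffel symbols of $h$ on $\overline{U}$). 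This yields $D^2 u_{\phi,x} \leqslant CI$ with $C = C(U)$ uniform in $x \in U$.

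With that uniform bound in hand, Lemma \ref{lem9} gives that $u_\phi - \tfrac{C}{2}\|\cdot\|_E^2$ is concave on $U$, i.e.\ $u_\phi$ is semiconcave with constant $C$ on $U$; since $x_0$ was arbitrary, $u_\phi$ is locally semiconcave on $I^-[\Gamma]$. The main obstacle I anticipate is the careful handling of the Hessian of the Lorentzian distance function: Lemma \ref{lemLL} only controls the spacelike directions orthogonal to $\nabla d$, so one must separately justify the behavior in the timelike gradient direction and the mixed terms, and must ensure that $p$ can be chosen so that $z \mapsto d(z,p)$ is smooth near $x$ (i.e.\ $x$ is not a cut point of $p$) and $d(z,p)$ avoids the degeneracies at $0$ and (when $c<0$) at $\pi/\sqrt{-c}$ — the freedom in choosing $p \in \gamma$ noted after Definition \ref{Def2} is precisely what makes this possible.
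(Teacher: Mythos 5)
Your proposal is correct and follows essentially the same route as the paper: choose a point $p$ on the maximal timelike geodesic from $x$ to $y_x$ so that $d(\cdot,p)$ is smooth near $x$ (no cut point, via the Beem--Ehrlich results), bound $\mathbf{Hess}\,d(\cdot,p)$ from below using Lemma \ref{lemLL} and compactness to get uniform curvature and distance bounds, convert that bound into a Euclidean coordinate Hessian estimate using smoothness of the Christoffel symbols, and then invoke Lemma \ref{lem9}. The one place where you are actually more careful than the paper's written proof is your explicit flag that Lemma \ref{lemLL} only controls the spacelike directions orthogonal to $\nabla d(\cdot,p)$, so the gradient direction and mixed terms need separate (standard) handling; the paper passes over this silently when asserting $D^2 d(\cdot,p)\geqslant C_1(O)I$ from Lemma \ref{lemLL}.
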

\begin{proof}
	Since $\gamma_x$ maximizes the distance between any pair of its points, there is no cut point to $x$ on $\gamma_x|_{(0, d(x,p))}$ for appropriately selected $p$. Then by \cite[Proposition 9.29]{beem_global_1981}, there exists a neighborhood $O$ of $x$ such that $\bar O$ is compact, $\bar O\varsubsetneqq I^-(p)$ and $d_p(\cdot):=d(\cdot,p)$ is a smooth function on $O$.
	
	Combining with Lemma \ref{lem9}, we only need to prove that there exists a constant $C(O)>0$ depending only on $O$ such that $D^2u_{\phi,x}\leqslant C(O)I$ near $x$ in the setting of Euclidean coordinates. By the definition of $u_{\phi,x}$, this is equivalent to prove that $D^2 d(\cdot, p)\geqslant -C(O)I$ near $x$ on $O$.
	
	Now for any $z\in O$, we denote $\eta_z$ the unique speed future-directed maximal geodesic connecting $z$ to $p$. In the following we will use the standard comparison principle to give an estimation of Hessian of Lorentzian distance function $d_p$ in terms of upper and lower bounds of timelike sectional curvatures of 2-planes containing $\dot{\eta}_z(0)$ and the length of maximal segment between $z$ and $p$, where the Hessian of $d_p$ is defined in terms of the Levi-Civita connection with respect to $g$.

Since $\bar O$ is a compact subset of $M$,
\begin{align*}
	\inf_{z\in\bar O,V\in \partial\mathscr C_z^{+1}}\left\{\bigg|\frac{\dot\eta_z(0)}{|\dot\eta_z(0)|_h}-V\bigg|_h\right\}>0,
\end{align*}
where $\mathscr C_z^{+1}:=\{V\in T_zM:V \text{ is a future-directed lightlike vector and } |V|_h=1\}$, which implies there exists a compact subset $K_O\subseteq TM$ such that $\cup_{z\in O}\{(\eta_z(0),\dot{\eta}_z(0))\}$ (thus $\cup_{z\in O}\{(\eta_z(t),\dot{\eta}_z(t)):t\in[0,d(z,p)]\}$) and $\cup_{z\in O}\{(z,V)\in TM:g(V,\dot\eta_z(0))=0, |V|=1\}$ are contained in $K_O$. Therefore, both the timelike sectional curvatures of 2-planes containing $\dot\eta_z(0)|_{z\in O}$ and the length of $\eta_z|_{[0,d(z,p)]}$ are bounded from above by some constant $C>0$ (e.g. \cite[Proposition 3.1]{andersson_cosmological_1998}). By Lemma \ref{lemLL}, there exists a constant $C_1(O)$ depending only on $O$ such that
\begin{align}\label{A22}
D^2 d(\cdot,p)\geqslant C_1(O)I.
\end{align}
For any $z\in O$, let $\{e_1, e_2,..., e_{n-1}, \dot{\eta}_z(0)\}$ be a local orthonormal frame of $T_zM$. Then we have
\begin{align*}
\frac{\partial^2d_p(z)}{\partial x_i \partial x_j}-\Gamma_{ij}^k \frac{\partial d_p(z)}{\partial x_k}\geqslant C_1(O)I,
\end{align*}
where $\Gamma_{ij}^k$ is the Christoffel symbol. Due to the smoothness of $\Gamma_{ij}^k$ and $d_p$ on $O$, we can get another constant $C_2(O)$, such that
\begin{align}\label{A23}
\frac{\partial^2d(z,p)}{\partial x_i \partial x_j}\geqslant C_2(O)I,
\end{align}
for any $z\in O$. From the estimate on Hessian of $d_p$ in Euclidean setting and Lemma \ref{lem9}, we have the local semiconcavity of $u_\phi$ on $I^-[\Gamma]$.
\end{proof}

\begin{Lem}[\protect{\cite[Lemma 2.5]{cui_negative_2014}}]
	\label{lem4}
  Suppose $V \in T_p M$ is a nonzero past-directed non-spacelike vector and
  \begin{align*}
  	g(V, W)\geqslant \sqrt{-g(W, W)}
  \end{align*}
   for any future-directed causal vector $W$, then $V$ is not lightlike.
\end{Lem}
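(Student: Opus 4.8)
The plan is to argue by contradiction: assume $V$ is past-directed \emph{lightlike} and still satisfies $g(V,W)\geqslant\sqrt{-g(W,W)}$ for every future-directed causal $W$, and then produce a single future-directed timelike vector $W$ violating this inequality. The conceptual reason such a $W$ exists is that, since $V$ is null, the future-directed null vector $-V$ lies on the boundary of the future cone, and a small perturbation $-V+\varepsilon W_{0}$ of $-V$ into the interior of the cone has Lorentzian norm of order $\sqrt{\varepsilon}$ while its pairing with $V$ is only of order $\varepsilon$; taking $\varepsilon$ small then breaks the inequality.

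Concretely, I would first fix any future-directed timelike $W_{0}\in T_{p}M$, normalized so that $g(W_{0},W_{0})=-1$, and set $W_{\varepsilon}:=-V+\varepsilon W_{0}$ for $\varepsilon>0$. Next I would invoke two standard facts of causality theory (consequences of the reversed Cauchy--Schwarz inequality for vectors in one time cone; see \cite{beem_global_1981}): the sum of two future-directed causal vectors is future-directed causal; and, since $-V$ is future-directed null and $W_{0}$ is future-directed timelike, the number $b:=-g(-V,W_{0})$ is strictly positive (both vectors lie in the closed future cone, giving $g(-V,W_{0})\leqslant 0$, while equality is impossible because the orthogonal complement of a null vector contains no timelike vector). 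A direct computation, using $g(V,V)=g(-V,-V)=0$ and $g(V,W_{0})=-g(-V,W_{0})=b$, then gives
\begin{align*}
g(W_{\varepsilon},W_{\varepsilon})=2\varepsilon\,g(-V,W_{0})+\varepsilon^{2}g(W_{0},W_{0})=-\varepsilon(2b+\varepsilon)<0,\qquad g(V,W_{\varepsilon})=\varepsilon\,g(V,W_{0})=\varepsilon b,
\end{align*}
so $W_{\varepsilon}$ is future-directed timelike for every $\varepsilon>0$.

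Finally I would compare the two sides. The hypothesis applied to $W_{\varepsilon}$ forces $\varepsilon b=g(V,W_{\varepsilon})\geqslant\sqrt{-g(W_{\varepsilon},W_{\varepsilon})}=\sqrt{\varepsilon}\,\sqrt{2b+\varepsilon}$ for all $\varepsilon>0$; squaring (both sides are positive) and dividing by $\varepsilon$ yields $\varepsilon(b^{2}-1)\geqslant 2b$, which fails for all sufficiently small $\varepsilon>0$ because the left-hand side tends to $0$ while the right-hand side is the fixed positive number $2b$. This contradiction shows $V$ cannot be lightlike, hence $V$ is timelike. I do not expect a genuine obstacle here: the only points needing care are the strict positivity of $b$ and the fact that $W_{\varepsilon}$ remains future-directed causal, both elementary from the causal structure. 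If one prefers to sidestep these abstract facts, an alternative is to pass to a $g$-orthonormal frame at $p$ in which $V$ is a positive multiple of $(-1,1,0,\dots,0)$ and test the inequality against $W=(N,-1,0,\dots,0)$ with $N$ large, where the same contradiction appears by an explicit computation.
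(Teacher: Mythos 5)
The lemma is stated in the paper by citation only (to Lemma~2.5 of the reference \texttt{cui\_negative\_2014}); no proof is reproduced there, so what matters is whether your argument is sound on its own. Your main argument is correct. Perturbing the future-directed null vector $-V$ by $\varepsilon W_{0}$ into the interior of the future cone produces a future-directed timelike $W_{\varepsilon}$ for which $g(V,W_{\varepsilon})=\varepsilon b$ is of order $\varepsilon$ while $\sqrt{-g(W_{\varepsilon},W_{\varepsilon})}=\sqrt{\varepsilon(2b+\varepsilon)}$ is of order $\sqrt{\varepsilon}$, so the hypothesis $g(V,W_{\varepsilon})\geqslant\sqrt{-g(W_{\varepsilon},W_{\varepsilon})}$ is violated for small $\varepsilon>0$. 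Your justification that $b=-g(-V,W_{0})>0$ is also right: two future-directed causal vectors pair nonpositively, with equality only if both are null and proportional, and a null vector cannot be $g$-orthogonal to a timelike one.

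One small slip in your closing remark about the coordinate alternative: in a $g$-orthonormal frame with $V=\lambda(-1,1,0,\dots,0)$, $\lambda>0$, testing $W=(N,-1,0,\dots,0)$ gives, after squaring and dividing by $N-1>0$, the condition $\lambda^{2}(N-1)\geqslant N+1$. This does \emph{not} fail for large $N$ once $\lambda\geqslant 1$; it fails for $N>1$ sufficiently close to $1$. So the explicit coordinate check produces the contradiction by pushing $W$ \emph{toward} the null boundary, i.e.\ $N\searrow 1$, which is the same mechanism as your $\varepsilon$-perturbation, not a large-$N$ limit. Your primary $\varepsilon$-argument needs no repair; only the last sentence should read ``$N$ close to $1$'' rather than ``$N$ large.''
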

\begin{Pro}\label{lem2}
 If $u_\phi$ is differentiable at $x$, then $\nabla u_\phi(x)$ is a past-directed timelike vector. Moreover, in this case, $u_\phi$ satisfies \eqref{E1} at $x$.
\end{Pro}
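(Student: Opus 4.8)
The plan is to play $u_\phi$ off against an upper support function at $x$ which is smooth near $x$ and lies above $u_\phi$, so that differentiability of $u_\phi$ forces their gradients to coincide. First I would fix the forward calibrated curve $\gamma_x\colon[0,d(x,y_x)]\to M$ of $u_\phi$ at $x$ — a future-directed timelike unit-speed maximal geodesic with $\gamma_x(0)=x$, $\gamma_x(d(x,y_x))=y_x$ and $u_\phi(\gamma_x(t))=u_\phi(x)+t$ — and, exactly as in the proof of Proposition \ref{pro:semiconcave}, pick $p=\gamma_x(t_0)$ with $0<t_0<d(x,y_x)$ and a neighbourhood $O\ni x$ with $\overline{O}\subset I^-(p)$ on which $d_p:=d(\cdot,p)$ is smooth (this uses that $\gamma_x|_{(0,t_0]}$ carries no cut point to $x$). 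The corresponding upper support function $u_{\phi,x}(z)=\phi(y_x)-d_p(z)-d(p,y_x)$ then satisfies $u_{\phi,x}\geq u_\phi$ on $O$ with equality at $x$, by the reverse triangle inequality for $d$ (see the Remark following Definition \ref{Def2}).

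Next, since $u_\phi$ is differentiable at $x$ while $u_{\phi,x}$ is smooth near $x$, the function $u_{\phi,x}-u_\phi$ is nonnegative near $x$, vanishes at $x$, and is differentiable at $x$; hence $x$ is an interior minimum and $\nabla(u_{\phi,x}-u_\phi)(x)=0$, i.e. $\nabla u_\phi(x)=\nabla u_{\phi,x}(x)=-\nabla d_p(x)$. It then remains to compute $\nabla d_p(x)$, and here I would invoke the standard fact that a smooth Lorentzian distance function solves the eikonal equation classically, with gradient along the maximizing geodesic: concretely, $d_p(\gamma_x(t))=t_0-t$ on $[0,t_0]$ already gives $g(\nabla d_p(x),\dot\gamma_x(0))=-1$, and the first variation formula for Lorentzian arclength together with the maximality $L(\gamma_x|_{[0,t_0]})=d(x,p)$ pins down $\nabla d_p(x)=\dot\gamma_x(0)$, a future-directed timelike unit vector. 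Consequently $\nabla u_\phi(x)=-\dot\gamma_x(0)$ is past-directed timelike and $g(\nabla u_\phi(x),\nabla u_\phi(x))=g(\dot\gamma_x(0),\dot\gamma_x(0))=-1$, which is \eqref{E1} at $x$.

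The only genuinely technical point is the identification $\nabla d_p(x)=\dot\gamma_x(0)$ in the last step (and the preparatory claim, borrowed from Proposition \ref{pro:semiconcave}, that $d_p$ is smooth on a full neighbourhood of $x$); everything else is soft. If one prefers to stay closer to Lemma \ref{lem4}, which is stated just above, an alternative is available: from $J^+_\Gamma(z)\subseteq J^+_\Gamma(x)$ and the reverse triangle inequality one gets $u_\phi(z)\geq u_\phi(x)+d(x,z)$ whenever $x\leq z$, and differentiating this along short future-directed timelike curves $s\mapsto\exp_x(sW)$ yields $g(\nabla u_\phi(x),W)\geq\sqrt{-g(W,W)}$ for every future-directed causal $W$. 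This excludes $\nabla u_\phi(x)$ being zero, spacelike or future-directed causal, so it is past-directed non-spacelike, and Lemma \ref{lem4} then rules out the lightlike case; taking $W=-\nabla u_\phi(x)/|\nabla u_\phi(x)|$ gives $g(\nabla u_\phi(x),\nabla u_\phi(x))\leq-1$, while evaluating differentiability along $\gamma_x$ gives $g(\nabla u_\phi(x),\dot\gamma_x(0))=1$ and then the reverse Cauchy--Schwarz inequality gives $g(\nabla u_\phi(x),\nabla u_\phi(x))\geq-1$.
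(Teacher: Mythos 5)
Both routes you offer are sound, and the alternative one in your final paragraph is essentially the paper's proof verbatim: from $J^+_\Gamma(z)\subseteq J^+_\Gamma(x)$ and the reverse triangle inequality one derives $g(\nabla u_\phi(x),W)\geqslant\sqrt{-g(W,W)}$ for every future-directed causal $W$, then excludes zero, spacelike, and future-directed causal by direct inspection, excludes null via Lemma \ref{lem4}, and finally sandwiches $|\nabla u_\phi(x)|$ between $1$ and $1$ by testing the inequality against $-\nabla u_\phi(x)/|\nabla u_\phi(x)|$ on one side and against the calibrating direction $\dot\gamma_x(0)$ with reverse Cauchy--Schwarz on the other. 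Your primary route is genuinely different and arguably more transparent. Touching $u_\phi$ from above at $x$ with the smooth upper support function $u_{\phi,x}=\phi(y_x)-d_p(\cdot)-d(p,y_x)$ and using differentiability of $u_\phi$ at $x$ gives the first-order condition $\nabla u_\phi(x)=\nabla u_{\phi,x}(x)=-\nabla d_p(x)$, and the Lorentzian Gauss-lemma identity $\nabla d_p(x)=\dot\gamma_x(0)$ (via the first variation of arclength for the unique maximizing geodesic from $x$ to $p$) hands you both the past-directed timelike orientation and $g(\nabla u_\phi(x),\nabla u_\phi(x))=-1$ in a single stroke, together with the sharper explicit formula $\nabla u_\phi(x)=-\dot\gamma_x(0)$. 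What this buys is exactly that explicit identification of the gradient; the cost is importing the gradient formula for $d_p$ and relying on the local smoothness of $d_p$ near $x$, but the latter is already invoked in the proof of Proposition \ref{pro:semiconcave}, so nothing new is needed. Your sign bookkeeping and the first-variation computation are correct; both arguments (like the paper's) tacitly use that the calibration curve $\gamma_x$ has positive length, so that an interior point $p$ can be chosen.
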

\begin{proof}
	Assume that $u_\phi$ is differentiable at $x\in I^-[\Gamma]$. Choosing any fixed smooth future-directed causal curve $\gamma: [0, T)\to M$ with some small $T>0$ and $\gamma(0)=x$, by reverse triangle inequality and the definition of $u_\phi$, we have
\begin{align*}
u_\phi(\gamma(t))-d(\gamma(0), \gamma(t))&=\min_{y\in J_\Gamma^+(\gamma(t))}\{\phi(y)-d(\gamma(t), y)\}-d(\gamma(0), \gamma(t))\\
&\geqslant\min_{y\in J_\Gamma^+(\gamma(t))}\{\phi(y)-d(\gamma(0), y)\}\\
&\geqslant\min_{y\in J_\Gamma^+(\gamma(0))}\{\phi(y)-d(\gamma(0), y)\}=u_\phi(\gamma(0))
\end{align*}since $J_\Gamma^+(\gamma(t))\subset J_\Gamma^+(\gamma(0))$ for every $t\in [0,T)$. Armed with that information,
\begin{align*}
u_\phi(\gamma(t))-u_\phi(\gamma(0))&\geqslant d(\gamma(0),\gamma(t))
\geqslant \int_0^t \sqrt{-g(\dot{\gamma}(s), \dot{\gamma}(s))}\,ds
\end{align*}
for every $t\in [0, T)$. Dividing by $t$ on both sides, we get
\begin{equation}\label{E2}
                 \frac{u_\phi(\gamma(t))-u_\phi(x)}{t}\geqslant\frac{1}{t}\int_0^t \sqrt{-g(\dot{\gamma}(s), \dot{\gamma}(s))}\,ds.
\end{equation}
As $t\searrow 0$, the differentiability of $u_\phi$ at $x$ and inequality (\ref{E2}) yield
\[
d u_\phi(x)(\dot{\gamma}(0))\geqslant \sqrt{-g(\dot{\gamma}(0), \dot{\gamma}(0))}.
\]
Note that $du_\phi(x)(\dot{\gamma}(0))=g(\nabla u_\phi(x), \dot{\gamma}(0))$, this means
\begin{equation}\label{E3}
                 g(\nabla u_\phi(x), \dot{\gamma}(0))\geq \sqrt{-g(\dot{\gamma}(0), \dot{\gamma}(0))}
\end{equation}
for any future-directed causal vector $\dot{\gamma}(0)$.

We now prove that $\nabla u_\phi(x)$ is timelike. First, it is easy to see that $\nabla u_\phi(x)\neq 0$.
Furthermore, suppose $\nabla u_\phi(x)$ is spacelike, then there exists a smooth future-directed timelike curve $\bar{\gamma}: [0, \varepsilon) \to M$ such that $\bar{\gamma}(0)=p$ and $g(\dot{\bar{\gamma}}(0), \nabla u_\phi(x))=0$, which means that $\nabla u_\phi(x)$ and $\dot{\bar{\gamma}}(0)$ are orthogonal with respect to the Lorentzian metric $g$. On the other hand, from (\ref{E3}), we get $g(\nabla u_\phi(x), \dot{\bar{\gamma}}(0))>0$, which contradicts the orthogonal hypothesis. Thus, $\nabla u_\phi(p)$ is a causal vector.  In addition,  $g(\nabla u_\phi(x), V)\geqslant 0$ for any future-directed vector $V\in T_pM$. This implies that $\nabla u_\phi(x)$ is past-directed. By Lemma \ref{lem4}, we have $\nabla u_\phi(x)$ is indeed a past-directed timelike vector.

For the rest of the proof, we can show that $|\nabla u_\phi(x)|=1$ equivalently. First we choose a future-directed smooth causal curve $\eta: [0, \varepsilon)\rightarrow M$ with $\eta(0)=x$, $\dot{\eta}(0)=-\nabla u_\phi(x)$. Then by inequality (\ref{E3}),
\begin{align*}
|\nabla u_\phi(x)|^2=-g(\nabla u_\phi(x), \nabla u_\phi(x))=g(\nabla u_\phi(x), \dot{\eta}(0))\geqslant\sqrt{-g(\dot{\eta}(0), \dot{\eta}(0))}=|\nabla u_\phi(x)|.
\end{align*}
Hence, $|\nabla u_\phi(x)|\geqslant 1$ since $\nabla u_\phi(x)\neq 0$.
Recall that for any $x\in I^-[\Gamma]$ there exists a $y_x\in \Gamma$ and a forward calibrated curve ${\gamma}_x: [0,d(x,y_x)]\to M$ that starts at $x$ and ends at $y_x$ satisfies
\begin{align*}
	u_\phi({\gamma}_x(t))-u_\phi({\gamma}_x(0))=t
\end{align*}
for any $t\in [0, d(x, y_x)]$. Then we get
\begin{align*}
\frac{u_\phi({\gamma}_x(t))-u_\phi({\gamma}_x(0))}{t-0}=1
\end{align*}
for every $t\in [0, d(x, y_x)]$. Let $t\searrow 0$, and by the the reverse Cauchy-Schwarz inequality for causal vectors (\cite[Section 2.2.1]{sachs_general_1977}) and ${\gamma}_x$ is a timelike (unit-speed) curve,  we have
\begin{equation}\label{E7}
  1=d u_\phi(x)(\dot{{\gamma}}_x(0))=g(\nabla u_\phi(x), \dot{{\gamma}}_x(0))\geqslant |\nabla u_\phi(x)| |\dot{{\gamma}}_x(0)|=|\nabla u_\phi(x)|.
\end{equation}
 Combining with $|\nabla u_\phi(x)|\geqslant 1$, \eqref{E7} implies $|\nabla u_\phi(x)|=1$.
\end{proof}
\begin{The}\label{lem7}
  Let $(M,g)$ be a globally hyperbolic space-time, then $u_\phi$ is a viscosity solution to \eqref{E1} and \eqref{A1} on $I^-[\Gamma]$.
\end{The}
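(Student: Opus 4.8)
The plan is to prove the two viscosity inequalities at an arbitrary point $p\in I^-[\Gamma]$ separately, exploiting the local semiconcavity of $u_\phi$ (Proposition \ref{pro:semiconcave}) to reduce everything to information about reachable gradients, which in turn is supplied by Proposition \ref{lem2}. First I would record the structural facts: by Propositions \ref{pro:semiconcave} and \ref{pro:co}, $u_\phi\in\Liploc(I^-[\Gamma])$ (in particular it is continuous), $\nabla^+u_\phi(p)=\mathrm{co}\,\nabla^*u_\phi(p)\neq\varnothing$ for every $p$, and $\nabla^-u_\phi(p)$ is nonempty only if $u_\phi$ is differentiable at $p$, in which case it is the singleton $\{\nabla u_\phi(p)\}$. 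The supersolution inequality is then immediate: if $u_\phi$ is not differentiable at $p$ there is nothing to verify, while if it is, Proposition \ref{lem2} yields $g(\nabla u_\phi(p),\nabla u_\phi(p))=-1\geqslant -1$ for the unique element of $\nabla^-u_\phi(p)$.

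For the subsolution inequality I would first show that every reachable gradient $V\in\nabla^*u_\phi(p)$ is past-directed timelike with $g(V,V)=-1$: writing $V=\lim_k\nabla u_\phi(p_k)$ with $p_k\to p$ and $u_\phi$ differentiable at each $p_k$, Proposition \ref{lem2} makes each $\nabla u_\phi(p_k)$ past-directed timelike of unit $g$-norm, and passing to the limit preserves the identity $g(\cdot,\cdot)=-1$ (continuity of $g$) and past-directedness (the past causal cone, together with $0$, is closed). Hence $\nabla^*u_\phi(p)$ lies in the set $\mathcal P_p:=\{W\in T_pM:\ W\ \text{past-directed},\ g(W,W)\leqslant -1\}$, which in $g$-orthonormal coordinates on $T_pM$ is the region lying under the lower sheet of the hyperboloid $g(W,W)=-1$ and is therefore convex. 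Since $\nabla^+u_\phi(p)=\mathrm{co}\,\nabla^*u_\phi(p)\subset\mathcal P_p$, we get $g(V,V)\leqslant -1$ for all $V\in\nabla^+u_\phi(p)$, as required.

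Combining the two inequalities shows $u_\phi$ is a viscosity solution of \eqref{E1} on $I^-[\Gamma]$. To obtain \eqref{A1} it then suffices to check the initial condition: since a Cauchy surface is acausal, for $x\in\Gamma$ one has $J^+_\Gamma(x)=\Gamma\cap J^+(x)=\{x\}$, so $u_\phi(x)=\phi(x)-d(x,x)=\phi(x)$, i.e. $u_\phi|_\Gamma=\phi$.

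The only genuinely delicate point is making sure that all reachable gradients lie in the \emph{same} (past-directed) sheet of $\{W:g(W,W)\leqslant -1\}$; otherwise the convex hull appearing in Proposition \ref{pro:co}(2) could escape the convex region $\mathcal P_p$ and the subsolution inequality would fail. This coherence is exactly what the ``past-directed'' half of Proposition \ref{lem2} provides, and it is the structural reason the solution ends up with consistent time orientation; the convexity of $\mathcal P_p$ itself is an elementary fact about Minkowski vector spaces.
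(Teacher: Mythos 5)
Your proof is correct and follows essentially the same route as the paper's: both reduce the subsolution and supersolution inequalities to Proposition \ref{lem2} via the local semiconcavity of $u_\phi$, the identity $\nabla^+u_\phi=\mathrm{co}\,\nabla^*u_\phi$ from Proposition \ref{pro:co}, and the fact that $\nabla^-u_\phi$ is nonempty only at points of differentiability. The one place you add detail the paper leaves implicit is the step from Proposition \ref{lem2} to the subsolution inequality for all of $\mathrm{co}\,\nabla^*u_\phi(p)$ --- namely that reachable gradients inherit past-directedness and $g=-1$ by passing to limits, and that the region $\{W:\ W\ \text{past-directed},\ g(W,W)\leqslant -1\}$ is convex; these are routine but genuine points, and you are right to flag the coherence of time orientation among the reachable gradients as the structural crux.
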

\begin{proof}
	By the definition of viscosity solution, we need to show $u_\phi$ is the subsolution and supersolution of equation (\ref{E1}). Firstly, for any $V\in \nabla^+ u_\phi(x)$,
by Lemma \ref{pro:co},
\begin{align*}
V\in \nabla^+ u_\phi(x)=\mathrm{co}\nabla^*u_\phi(x).
\end{align*}
Together with Lemma \ref{lem2}, we can conclude that $g(V,V)\leqslant -1$. It means that $u_\phi$ is a subsolution of the equation (\ref{E1}).

On the other hand, since $u_\phi$ is locally semiconcave, $u_\phi$ is differentiable at $x\in M$ if and only if $\nabla^- u_\phi(x)\neq \varnothing$ and $\nabla^+ u_\phi(x)=\nabla^- u_\phi(x)=\{\nabla u_\phi(x)\}$. Thus, Proposition \ref{lem2} implies that $u_\phi$ is a supersolution. Therefore $u_\phi$ is indeed a viscosity solution of the equation \eqref{E1}. Moreover, the definition of $u_\phi$ promises that $u_\phi|_{\Gamma}=\phi$, which means $u_\phi$ is also a viscosity solution to \eqref{A1}.
\end{proof}
\subsection{Uniqueness of the solution}\label{sec:uniqueness}
For brevity of the following illustration, we first define some subsets of $\mathscr{S}(M)$ below:
\begin{align*}
	\mathscr{S}_{\Gamma_\phi}(M)&:=\{u\in \mathscr{S}(M): u|_\Gamma=\phi\},\\
	\mathscr{S}^-_{\Gamma_\phi}(M)&:=\{u\in \mathscr{S}(M): u|_\Gamma=\phi\text{ and the time orientation of }u\text{ is always past-directed}\}.
\end{align*}
And about \eqref{A1}, we also define
\begin{align*}
	\mathscr{C}_\phi(I^-[\Gamma])&:=\left\{f\in\Liploc(M): f \mbox{ is global viscosity solution to \eqref{A1}}\right\},\\
	\mathscr{C}_\phi^-(I^-[\Gamma])&:=\left\{f\in\mathscr{C}_\phi(I^-[\Gamma]): \text{ the time orientation of }u\text{ is always past-directed}\right\}.
\end{align*}
Note that we have no reason here to claim that $\mathscr{S}_{\Gamma_\phi}(M)=\mathscr{C}_\phi(I^-[\Gamma])$ and $\mathscr{S}_{\Gamma_\phi}^-(M)=\mathscr{C}_\phi^-(I^-[\Gamma])$.
\begin{Lem}\label{lem:unique}
	Suppose $\Gamma$ is a Cauchy surface of $(M,g)$ and $\phi\in\Liploc(\Gamma)$,  then
	\begin{align}\label{eq:unique1}
		\left\{u|_{I^-[\Gamma]}:u\in\mathscr{S}_{\Gamma_\phi}^-(M)\right\}=\left\{u_\phi\right\},
	\end{align}i.e., in some sense, the solutions in $\mathscr{S}_{\Gamma_\phi}^-(M)$ are unique on half-space $I^-[\Gamma]$.
\end{Lem}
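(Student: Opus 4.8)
The plan is to prove the two inclusions in \eqref{eq:unique1} separately; the substantive one is that every $u\in\mathscr{S}^-_{\Gamma_\phi}(M)$ satisfies $u|_{I^-[\Gamma]}=u_\phi$, while the reverse inclusion only asks for one member of $\mathscr{S}^-_{\Gamma_\phi}(M)$. So fix $u\in\mathscr{S}^-_{\Gamma_\phi}(M)$. I would first show that $u(x)=u_\phi(x)$ at every $x\in I^-[\Gamma]$ where $u$ is differentiable, by the double estimate $u_\phi(x)\leqslant u(x)\leqslant u_\phi(x)$, and then pass to all of $I^-[\Gamma]$ using that such points are dense (Rademacher's theorem, $u$ being locally Lipschitz) together with the continuity of $u$ and of $u_\phi$.

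For $u(x)\geqslant u_\phi(x)$ I would invoke the weak KAM description of past-directed solutions in Theorem \ref{thm:1}: at a differentiability point it furnishes a future-directed timelike ray $\gamma_x:[0,T)\to M$ with $\gamma_x(0)=x$ and $\dot\gamma_x(0)=-\nabla u(x)$, along which $u$ increases at unit Lorentzian speed (indeed $\tfrac{d}{dt}u(\gamma_x(t))=g(\nabla u,\dot\gamma_x)=-g(\nabla u,\nabla u)=1$), so $u(\gamma_x(t))-u(x)=t$. Prolonging $\gamma_x$ to a maximal inextendible timelike curve and using that $\Gamma$ is a Cauchy surface, this curve meets $\Gamma$ exactly once; since $x\in I^-[\Gamma]$ and $\Gamma$ is achronal (two chronologically related points of $\Gamma$ would give an inextendible timelike curve striking $\Gamma$ twice), the intersection occurs at a parameter $t_0>0$, at a point $y:=\gamma_x(t_0)\in\Gamma\cap I^+(x)$. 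As $\gamma_x$ is a ray, $d(x,y)=t_0$, whence $u(x)=u(y)-d(x,y)=\phi(y)-d(x,y)$ by $u|_\Gamma=\phi$. Thus $u(x)$ is one of the values competing in the infimum \eqref{eq:u_phi}, so $u(x)\geqslant u_\phi(x)$.

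For $u(x)\leqslant u_\phi(x)$ I would establish the reverse-Lipschitz (calibration) inequality $u(q)-u(p)\geqslant d(p,q)$ for all $p\leqslant q$ and apply it with $p=x$ and $q=y$ running over $J^+_\Gamma(x)$: since $u(y)=\phi(y)$ this gives $u(x)\leqslant\phi(y)-d(x,y)$ for every such $y$, hence $u(x)\leqslant u_\phi(x)$ after taking the infimum (a minimizer exists by Lemma \ref{lem1}). To prove the calibration inequality I would first record that, where differentiable, $\nabla u$ is a past-directed unit timelike vector (unit length because $u$ is a viscosity solution of \eqref{E1}, past-directed by the time-orientation hypothesis, just as in Proposition \ref{lem2}), and — the key regularity input — that $u$ is locally semiconcave on $M$; I would take the latter from the theory of consistent-orientation solutions in \cite{zhu2023global}, or derive it from Theorem \ref{thm:1}, whose second part confines the singular set of $u$ to the initial endpoints of calibrated rays. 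Granting this, fix $q$ and compare $u$ with $p\mapsto u(q)-d(p,q)$: on $I^-(q)$ the Lorentzian distance $d(\cdot,q)$ is itself locally semiconcave and, off the cut locus of $q$, a smooth solution of \eqref{E1} with \emph{past}-directed gradient. Were $\Phi:=u-u(q)+d(\cdot,q)$ to have a positive interior maximum at some $p^*$ of the compact set $J^+(p)\cap J^-(q)$, then $0\in\nabla^+\Phi(p^*)=\nabla^+u(p^*)+\nabla^+d(\cdot,q)(p^*)$ by the superdifferential sum rule for semiconcave functions, so $\nabla^+u(p^*)$ would contain a future-directed timelike vector; but $\nabla^+u(p^*)=\mathrm{co}\,\nabla^*u(p^*)$ by Proposition \ref{pro:co}, and $\nabla^*u(p^*)$ consists of past-directed timelike vectors, whose convex hull avoids the future cone — a contradiction. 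Hence $\Phi\leqslant\Phi(q)=0$, which is the claimed inequality. (Alternatively, once $u$ and $d(\cdot,q)$ are known locally semiconcave, integrate $\tfrac{d}{dt}u(\sigma(t))=g(\nabla u(\sigma(t)),\dot\sigma(t))\geqslant|\dot\sigma(t)|$ along a maximizing geodesic $\sigma$ from $p$ to $q$.)

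Combining the two estimates yields $u|_{I^-[\Gamma]}=u_\phi$, i.e.\ the inclusion $\subseteq$ in \eqref{eq:unique1}. For the reverse inclusion it suffices to exhibit one element of $\mathscr{S}^-_{\Gamma_\phi}(M)$: I would glue to $u_\phi$, across $\Gamma$, the time-reversed formula $\widetilde u(x):=\sup_{y\in\Gamma\cap J^-(x)}\{\phi(y)+d(y,x)\}$ for $x\in I^+[\Gamma]$, and verify — by the time-reversed analogues of Lemma \ref{lem1}, Proposition \ref{pro:semiconcave}, Proposition \ref{lem2} and Theorem \ref{lem7}, with particular care to the matching along $\Gamma$ — that the result lies in $\mathscr{S}^-_{\Gamma_\phi}(M)$; alternatively one may quote the global existence statements of \cite{zhu2023global}. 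Either way $\mathscr{S}^-_{\Gamma_\phi}(M)\neq\varnothing$, and by the inclusion already proved every such solution restricts to $u_\phi$, giving \eqref{eq:unique1}. I expect the reverse-Lipschitz estimate to be the main obstacle: the timelike eikonal operator is neither convex nor coercive, so no classical comparison principle is available, and one must first squeeze out enough regularity of a general $u\in\mathscr{S}^-_{\Gamma_\phi}(M)$ — effectively its local semiconcavity, or the structure of its singular set — before the calibration inequality, and with it the uniqueness, can be pushed through.
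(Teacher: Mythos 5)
Your overall strategy matches the paper's: the double inequality $u_\phi\leqslant u\leqslant u_\phi$ on $I^-[\Gamma]$, with $u\geqslant u_\phi$ from the calibrated rays of $u$ supplied by Theorem~\ref{thm:1}, and $u\leqslant u_\phi$ from the domination inequality $u(q)-u(p)\geqslant d(p,q)$ applied along the calibrated curve $\gamma_x$ of $u_\phi$. The paper states the second inequality almost without proof (``by the definition of $u_\phi$ and $u$''); you attempt to prove it, which is the right instinct, and you also correctly observe that the asserted set equality tacitly needs $\mathscr{S}^-_{\Gamma_\phi}(M)\neq\varnothing$, a point the paper does not address. Your density-and-continuity reduction to differentiability points of $u$ is a slightly more cautious version of the paper's appeal to calibrated rays emanating from \emph{every} $x$.

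However, your principal argument for the domination inequality contains a sign error that invalidates it. You assert that, off the cut locus, $d(\cdot,q)$ solves \eqref{E1} ``with \emph{past}-directed gradient''. It is in fact \emph{future}-directed: if $\sigma$ is the unit-speed future-directed maximizing geodesic from $p$ to $q$, then $\tfrac{d}{dt}d(\sigma(t),q)=-1=g\bigl(\nabla_1 d(\sigma(t),q),\dot\sigma(t)\bigr)$, and a negative $g$-inner product of two timelike vectors means they lie in the \emph{same} cone as $\dot\sigma$, hence future-directed. With the sign corrected, your comparison argument yields no contradiction: at a putative interior maximum $p^*$ of $\Phi=u-u(q)+d(\cdot,q)$ one gets $0=V+W$ with $W=\nabla_1 d(p^*,q)$ future-directed unit timelike, so $V=-W$ is past-directed unit timelike — exactly what membership in $\nabla^+u(p^*)=\mathrm{co}\,\nabla^*u(p^*)$ permits. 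Nothing is refuted. Your parenthetical ``alternative'' is the argument that actually works and should be promoted to the main one: along a maximizing geodesic $\sigma$ from $p$ to $q$, use the directional-derivative formula for semiconcave functions, $\tfrac{d^+}{dt}(u\circ\sigma)(t)=\min\{g(W,\dot\sigma(t)):W\in\nabla^+u(\sigma(t))\}$, combine it with the reverse Cauchy--Schwarz inequality for timelike vectors in opposite cones to get $\tfrac{d^+}{dt}(u\circ\sigma)\geqslant|\dot\sigma|$, and integrate to obtain $u(q)-u(p)\geqslant L(\sigma)=d(p,q)$.
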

\begin{proof}
	Let $u\in\mathscr{S}^-_{\Gamma_\phi}(M)$. Recall that $u_\phi$ has a forward calibrated curve $\gamma_x$ from $x$ with unit speed. By the definition of $u_\phi$ and $u$, for $x\in I^-[\Gamma]$, we have $u(x)\leqslant u({\gamma}_x(t))-d(x,{\gamma}_x(t))$ for any $t\in [0, d(x, y_x)]$, and when $t=d(x,y_x)$,
\begin{align*}
  u(x)&\leqslant u(y_x)-d(x,y_x)=\phi(y_x)-d(x,y_x)=u_\phi(x).
\end{align*}
On the other hand, Theorem \ref{thm:1}
 claims $u$ has an inextendiable forward calibrated curve $\eta_x$ from any fixed $x\in M$. There exists some $T>0$ such that $\eta_x(T)\in\Gamma$. Then, we have
\begin{align*}
  u(x)=u(\eta_x(T))-T=\phi(\eta_x(T))-d(x,\eta_x(T))\geqslant u_\phi(x),
\end{align*}which means $u(x)=u_\phi(x)$. Arbitrariness of $u$ and $x$ implies \eqref{eq:unique1} holds true.
\end{proof}
\begin{Rem}
	It is worth noting that the condition on consistence of time orientation of solutions is necessary, and the uniqueness of viscosity solution of \eqref{A1} doesn't hold true any more in general. To be precise, see the following counterexample.

Consider the 2-dimensional Minkowski space-time $(\R^2, dy^2-dx^2)$, let
\begin{align*}
	\Gamma=\{(x,y)\in\R^2:x=0\},\,\,\,\, \phi=0.
\end{align*}
Then for any $c<0$, $u_c(x,y):=|x-c|+c$ is the solution to system \eqref{A1}.
\end{Rem}
\begin{Cor}\label{cor:forward calibrated}
Following from Theorem \ref{thm:1} and Lemma \ref{lem:unique}, we have
	\begin{enumerate}[\rm (1)]
		\item For any $u\in \mathscr{S}^-_{\Gamma_\phi}(M)$, $u$ is differentiable at $x\in M$ if and only if there exists a unique forward calibrated curve $\gamma_x: [0, d(x, \Gamma)]\rightarrow M$ that satisfies
          \begin{align}\label{E12}
           \gamma_x(0)=x,~~ u(\gamma_x(t))=u(x)-t~~\mbox{for every } t\in [0 ,d(x,\Gamma)]
          \end{align}
          and $ \dot{\gamma_x}(0)=-\nabla u(x)$;
%          \begin{align}\label{E13}
%           \dot{\gamma_x}(0)=-\nabla u(x)
%          \end{align}
       \item  For any $u\in\mathscr{S}^-_{\Gamma_\phi}(M)$ and any $x\in M$, $u$ is differentiable at any $\gamma_x(t)$ for $t\in (0, d(x,\Gamma))$ and $\dot{\gamma_x}(t)=-\nabla u(\gamma_x(t))$.
	\end{enumerate}
\end{Cor}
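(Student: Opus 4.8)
The plan is to obtain Corollary~\ref{cor:forward calibrated} directly from Theorem~\ref{thm:1} and Lemma~\ref{lem:unique}: the globally defined calibrated ray supplied by Theorem~\ref{thm:1} will simply be truncated at the point where it crosses the Cauchy surface $\Gamma$. Fix $u\in\mathscr{S}^-_{\Gamma_\phi}(M)$ and $x\in I^-[\Gamma]$ (when $x\in\Gamma$ one has $d(x,\Gamma)=0$ and there is nothing to prove). By Lemma~\ref{lem:unique}, $u\equiv u_\phi$ on the open set $I^-[\Gamma]$, so $u$ and $u_\phi$ agree near $x$ together with their differentiability points and gradients; by Proposition~\ref{pro:semiconcave} $u$ is locally semiconcave near $x$, and since $u\in\mathscr{S}^-_{\Gamma_\phi}(M)$, at every differentiability point $\nabla u$ is a past-directed timelike unit vector while every $V\in\nabla^{*}u(x)$ is such a vector as well.

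\emph{Necessity and item $(2)$.} Assume $u$ is differentiable at $x$. Since the time orientation of $u$ is past-directed, Theorem~\ref{thm:1} provides the unique future-directed timelike ray $\gamma_x\colon[0,T)\to M$ with $\gamma_x(0)=x$, $\dot\gamma_x(0)=-\nabla u(x)$, calibrating $u$ on $[0,T)$, and $d(x,\gamma_x(t))=t$ by maximality. Being future-inextendible and issuing from $x\in I^-[\Gamma]$, the ray $\gamma_x$ meets $\Gamma$; achronality of $\Gamma$ makes the crossing parameter $t_0$ unique, and $x\in I^-[\Gamma]$ forces $t_0\in(0,T)$. Since $\gamma_x$ is distance-realizing, $t_0=d(x,\gamma_x(t_0))$, and combining $u|_\Gamma=\phi$ with $u=u_\phi$ one identifies $\gamma_x(t_0)$ with a minimizer $y_x$ of~\eqref{eq:u_phi}; hence $t_0$ is precisely the parameter $d(x,\Gamma)$ of~\eqref{E12}, and $\gamma_x|_{[0,d(x,\Gamma)]}$ is the required forward calibrated curve. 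Applying part~$(2)$ of Theorem~\ref{thm:1} to $\gamma_x$ gives differentiability of $u$ at each $\gamma_x(t)\in I^-[\Gamma]$ with $\dot\gamma_x(t)=-\nabla u(\gamma_x(t))$ for every $t\in(0,T)\supseteq(0,d(x,\Gamma))$, which is item~$(2)$.

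\emph{Uniqueness and sufficiency.} Let $\alpha\colon[0,d(x,\Gamma)]\to M$ be any forward calibrated curve of $u$ with $\alpha(0)=x$. The calibration makes $\alpha$ unit-speed, and $u=u_\phi$ satisfies $u(q)-u(p)\geq d(p,q)$ for $p\leq q$ (cf.\ the proof of Proposition~\ref{lem2}), while $d\geq L$ always; hence $L(\alpha|_{[a,b]})\leq d(\alpha(a),\alpha(b))\leq u(\alpha(b))-u(\alpha(a))=b-a=L(\alpha|_{[a,b]})$, so $\alpha$ is distance-realizing and therefore a maximal timelike geodesic. Differentiating the calibration at $0$ gives $g(\nabla u(x),\dot\alpha(0))=\pm1$, and with $|\nabla u(x)|=|\dot\alpha(0)|=1$ the reverse Cauchy--Schwarz inequality for causal vectors forces $\dot\alpha(0)$ to be the unique future-directed timelike unit vector collinear with $\nabla u(x)$, namely $-\nabla u(x)$; uniqueness of geodesics then yields $\alpha=\gamma_x|_{[0,d(x,\Gamma)]}$. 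Conversely, if $u$ is \emph{not} differentiable at $x$, then, $u=u_\phi$ being semiconcave near $x$, Proposition~\ref{pro:co} shows $\nabla^{*}u(x)$ is not a singleton; writing distinct $V_1,V_2\in\nabla^{*}u(x)$ as limits of gradients at differentiability points tending to $x$ and passing to the limit of the corresponding rays (continuous dependence of geodesics on initial data together with Lemma~\ref{lem:uni-semiconcave}) produces two distinct forward calibrated curves of $u$ from $x$, each reaching $\Gamma$ as above, contradicting uniqueness. Thus $u$ is differentiable at $x$, and the equivalence is proved.

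The step I expect to be the crux is the interface with the Cauchy surface in the two middle paragraphs: confirming that the future-inextendible calibrated ray of Theorem~\ref{thm:1} really does cross $\Gamma$ and that the crossing occurs at the parameter named $d(x,\Gamma)$ in~\eqref{E12}, and, for sufficiency, that the limiting curves produced from $\nabla^{*}u(x)$ remain calibrated all the way up to $\Gamma$ rather than merely for small time. Both points rely crucially on $\Gamma$ being a Cauchy surface — so that future-inextendible causal curves starting in $I^-[\Gamma]$ meet $\Gamma$ exactly once — and on the structural results of Section~\ref{sec:existence}.
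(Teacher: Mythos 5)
Your overall strategy — truncating the calibrated ray of Theorem~\ref{thm:1} at the Cauchy surface and invoking Lemma~\ref{lem:unique} to identify $u$ with $u_\phi$ on $I^-[\Gamma]$ — is exactly the deduction the paper intends (the paper offers no written proof). The parts you flag as potentially delicate (the ray crossing $\Gamma$, the identification of the crossing parameter with $d(x,\Gamma)$, and passing to the limit of calibrated geodesics from approximating differentiability points) are handled correctly and in the standard way. You are also right to hedge the sign in $g(\nabla u(x),\dot\alpha(0))=\pm1$: equation~\eqref{E12} carries a sign inconsistency with the definition of a forward calibrated curve (which gives $u(\alpha(s))=u(\alpha(0))+s$), and your reverse Cauchy--Schwarz step is insensitive to it.

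There is, however, one genuine gap in the uniqueness paragraph. You assert that ``the calibration makes $\alpha$ unit-speed,'' and your subsequent chain $L(\alpha|_{[a,b]})\le d(\alpha(a),\alpha(b))\le u(\alpha(b))-u(\alpha(a))=b-a=L(\alpha|_{[a,b]})$ uses $L(\alpha|_{[a,b]})=b-a$ at the last step, i.e.\ it presupposes what it is meant to establish. In fact unit speed does \emph{not} follow from the paper's Definition of a forward calibrated curve: the calibration plus the Lipschitz bound $u(q)-u(p)\ge d(p,q)$ only give $|\dot\alpha(s)|\le 1$ a.e. For a concrete counterexample, take $(\R^2,-dt^2+dx^2)$, $\Gamma=\{t=1\}$, $\phi\equiv 0$, so $u(t,x)=t-1$ and $\nabla u=-\partial_t$. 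Then for any smooth $g$ with $g(0)=g'(0)=0$ and $|g'|<1$, the curve $\alpha(s)=(s,g(s))$, $s\in[0,1]$, is future-directed timelike, satisfies $u(\alpha(s))=u(\alpha(0))+s$ and $\dot\alpha(0)=-\nabla u(0,0)$, yet is not a geodesic (and not unit-speed) unless $g\equiv 0$. Thus without the unit-speed assumption neither ``distance-realizing'' nor geodesic uniqueness can be invoked, and the ``only if'' direction of item $(1)$ is false as stated. The fix is to build the unit-speed (equivalently, maximal geodesic) requirement into what you mean by a forward calibrated curve, which is what the paper's own constructions do — the curve $\gamma_x$ in Section~\ref{sec:existence} is a unit-speed maximal geodesic, Theorem~\ref{thm:1} works with rays, and the proof of Lemma~\ref{lem:unique} explicitly recalls that $\gamma_x$ has unit speed. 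You should state this hypothesis rather than claim to derive it from the calibration identity.
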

\begin{The}[\protect{\cite[Theorem 2]{zhu2023global}}]\label{the7}
 Assume that $u\in\mathscr{S}(M)$ and the time orientation of $u$ is always past-directed, then there exists some $t_0>0$, which depends on $x$, such that:
\begin{enumerate}[\rm (1)]
  \item for each $s\in \mathrm{Image}\,(u)$, $u^{-1}(s)$ is a partial Cauchy surface;
  \item for any $t\in(0,t_0)$,
\begin{align*}
           u(x)=\inf\limits_{x\leq y, d(x,y)=t}\{u(y)-t\};
\end{align*}
  \item $u$ is locally semiconcave on $(M,g)$.
\end{enumerate}
\end{The}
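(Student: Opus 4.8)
The plan is to derive from the viscosity property, together with the consistency of the time orientation, a global ``domination'' inequality and the existence of forward calibrated rays, and then to read off items (3), (1) and (2) from these (item (1) using item (3)). The first thing I would prove is that
\[
u(q)-u(p)\ \geqslant\ d(p,q)\qquad\text{whenever}\quad p\leqslant q,
\]
strict when $p\ll q$ (in which case $d(p,q)>0$). Since $u\in\Liploc(M)$ it is differentiable a.e., and where it is differentiable the equation together with consistency forces $\nabla u(x)$ to be a past-directed $g$-unit timelike vector; the reverse Cauchy--Schwarz inequality for causal vectors in opposite time cones then gives $g(\nabla u(x),v)\geqslant|\nabla u(x)|\,|v|=|v|$ for every future-directed causal $v$, i.e.\ the analogue of \eqref{E3}. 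For a future-directed causal curve $\gamma$ from $p$ to $q$ one perturbs $\gamma$ inside a transverse foliation to a family $\gamma_\varepsilon$ which, for a.e.\ $\varepsilon$, meets the (null) non-differentiability set of $u$ only on a null parameter set; integrating $\frac{d}{dt}u(\gamma_\varepsilon(t))=g(\nabla u(\gamma_\varepsilon(t)),\dot\gamma_\varepsilon(t))\geqslant|\dot\gamma_\varepsilon(t)|$ and letting $\varepsilon\to0$ gives $u(q)-u(p)\geqslant L(\gamma)$, hence the claim on taking the supremum over $\gamma$.

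Next I would show that a \emph{forward calibrated ray} $\gamma_x$ — a future-directed $g$-unit-speed timelike geodesic with $u(\gamma_x(s))=u(x)+s$ — issues from every $x$: at differentiability points this is Theorem \ref{thm:1}, and for general $x_0$ one takes $x_n\to x_0$ with $u$ differentiable at $x_n$, uses that $|\nabla u(x_n)|_h$ is locally bounded to pass (by continuous dependence on initial data of the geodesic equation) to a limiting timelike geodesic $\gamma_{x_0}$, and checks — domination giving ``$\geqslant$'' and the calibration of the $\gamma_{x_n}$ giving ``$\leqslant$'' in the limit — that $u(\gamma_{x_0}(s))=u(x_0)+s$ on a maximal interval $[0,T(x_0))$ with $T(x_0)>0$. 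Any such unit-speed calibrated curve is automatically globally maximal, $d(\gamma_x(a),\gamma_x(b))=b-a$, since $b-a=L(\gamma_x|_{[a,b]})\leqslant d(\gamma_x(a),\gamma_x(b))\leqslant u(\gamma_x(b))-u(\gamma_x(a))=b-a$. With this, item (3) is proved as in Proposition \ref{pro:semiconcave}: for fixed $x$ put $p=\gamma_x(\delta)$ with $\delta>0$ small, so $x$ is a non-cut point of $p$ and $d(\cdot,p)$ is smooth on a neighbourhood $O\ni x$ with $\bar O\subset I^-(p)$ compact; then $z\mapsto u(p)-d(z,p)$ is a smooth upper support function of $u$ at $x$ (domination for ``$\geqslant u$'', calibration for equality at $x$), bounding the timelike sectional curvatures and segment lengths on the relevant compact family and applying Lemma \ref{lemLL} yields $D^2 d(\cdot,p)\geqslant -C(O)I$ and hence an upper Euclidean-Hessian bound on the support function, and Lemma \ref{lem9} makes $u$ semiconcave with constant $C(O)$ near $x$. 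As $x$ is arbitrary, $u$ is locally semiconcave on $M$; by Proposition \ref{pro:co}, $\varnothing\neq\nabla^+u(z)=\mathrm{co}\,\nabla^*u(z)$, which lies in the past time cone since $\nabla^*u(z)$ does (convex hulls preserve a time cone).

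For item (1), let $s\in\mathrm{Image}\,(u)$ and suppose $p,q\in u^{-1}(s)$ with $p\leqslant q$ and $p\neq q$; then a future-directed causal curve $\sigma$ joins $p$ to $q$, and by domination $u$ is nondecreasing along $\sigma$, so $u\equiv s$ on $\sigma$. Choosing an interior differentiability point $z=\sigma(r)$ of $\sigma$ and a supergradient $V\in\nabla^+u(z)$ (nonempty and timelike, by item (3)), a $C^1$ test function $\varphi\geqslant u$ near $z$ with $\varphi(z)=u(z)$ and $\nabla\varphi(z)=V$ satisfies $\varphi\circ\sigma\geqslant s$ with a minimum at $r$, forcing $g(V,\dot\sigma(r))=0$; this is impossible, since $\dot\sigma(r)$ is a nonzero causal vector while the $g$-orthogonal complement of the timelike $V$ is spacelike. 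Hence $u^{-1}(s)$ is acausal; it is also edgeless, because $u<s$ on $I^-(p)$ and $u>s$ on $I^+(p)$ for each $p\in u^{-1}(s)$ (strict domination), so every timelike curve joining $I^-(p)$ to $I^+(p)$ crosses $u^{-1}(s)$ — and a closed, acausal, edgeless subset of a spacetime is a (topological) partial Cauchy surface. For item (2), fix $x$ and set $t_0:=T(x)>0$; for $t\in(0,t_0)$, every $y$ with $x\leqslant y$ and $d(x,y)=t$ satisfies $u(y)-t=u(y)-d(x,y)\geqslant u(x)$ by domination, while $y=\gamma_x(t)$ gives $d(x,\gamma_x(t))=t$ and $u(\gamma_x(t))-t=u(x)$, so $u(x)=\inf_{x\leqslant y,\,d(x,y)=t}\{u(y)-t\}$, the infimum being attained at $\gamma_x(t)$.

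The step I expect to be the real obstacle is the first one: converting the a.e.\ information carried by the viscosity solution (the equation at differentiability points, and $\nabla^*u$ being confined to one time cone) into the \emph{global} domination inequality and into forward calibrated rays emanating from \emph{every} point, including points of non-differentiability. The measure-theoretic perturbation argument for domination (admissibility of the perturbing family, semicontinuity of Lorentzian length under it) and the compactness argument for the rays (a uniform positive lower bound on the calibration times $T(x_n)$) are routine in spirit but require care. Once these are in hand, items (3), (1), (2) are essentially bookkeeping over Lemmas \ref{lemLL}, \ref{lem9}, Proposition \ref{pro:co} and Theorem \ref{thm:1}, and parallel the treatment of $u_\phi$ in Section \ref{sec:existence}.
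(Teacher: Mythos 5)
The paper does not prove Theorem~\ref{the7}: it is quoted verbatim from \cite[Theorem~2]{zhu2023global}, so there is no in-paper argument against which to compare. What can be compared is your reconstruction against the toolbox the paper does exhibit, and there the match is very close: your proof of item~(3) is essentially a re-run of Proposition~\ref{pro:semiconcave} (forward calibrated geodesic, upper support function $u(p)-d(\cdot,p)$ on a cut-locus-free neighborhood, the Hessian comparison of Lemma~\ref{lemLL} and the concavity criterion of Lemma~\ref{lem9}); your item~(2) is exactly how the paper uses calibrated curves together with the causal monotonicity inequality in the proofs of Lemma~\ref{lem:unique} and Theorem~\ref{thm:unique}; and your acausality/edge-free argument for item~(1) uses Proposition~\ref{pro:co} and the timelikeness of supergradients exactly as in the proof of Theorem~\ref{lem7}. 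So the route is the one the paper's own machinery suggests, and I find the overall plan and all three item-deductions correct. (One small point in your favor: your calibration formula $u(\gamma_x(s))=u(x)+s$ is the sign-consistent one; the paper's restatement of Theorem~\ref{thm:1} has a sign slip there.)

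The only place you would have to supply genuine work not already displayed in this paper is the very step you flag yourself: passing from the a.e.\ information (unit past-directed timelike $\nabla u$ where it exists) to the global domination inequality $u(q)-u(p)\geqslant d(p,q)$ for $p\leqslant q$, and to forward calibrated rays from \emph{every} $x$. Both are fixable but need to be spelled out. For domination, the Fubini-type perturbation argument must be restricted to timelike $\gamma$ (so that nearby curves remain causal and $g(\nabla u,\dot\gamma_\varepsilon)\geqslant|\dot\gamma_\varepsilon|$ still applies), and the case $p\leqslant q$, $p\not\ll q$ should be obtained by approximating $p$ from $I^-(p)$ rather than by perturbing a null geodesic; also note the relevant continuity is of $L(\gamma_\varepsilon)$ under $C^1$-convergence of the perturbing family, not mere $C^0$ upper semicontinuity. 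For the rays at a non-differentiability point $x_0$, after extracting $\nabla u(x_n)\to W\in\nabla^*u(x_0)$ (past-directed $g$-unit timelike by the standing hypothesis), the needed uniform lower bound on calibration times follows from lower semicontinuity of the maximal geodesic existence time in initial data plus the fact that for a solution in $\mathscr{S}(M)$ the calibration interval of the ray coincides with its existence interval; calibration then passes to the limit by continuity of $u$ and of the geodesic flow. With these two steps made precise, your proof is complete and follows the paper's intended approach.
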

\begin{The}\label{thm:unique}
Suppose $\Gamma$ is a Cauchy surface of $(M,g)$ and $\phi\in\Liploc(\Gamma)$,  then
	\begin{align}\label{eq:unique2}
		\mathscr{C}_\phi^-(I^-[\Gamma])=\left\{u_\phi\right\}.
	\end{align}In other words, $u_\phi$ is the unique viscosity solution to $\eqref{A1}$ with always past-directed time orientation.
\end{The}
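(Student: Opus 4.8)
The plan is to establish the two inclusions in \eqref{eq:unique2} separately. The inclusion $\{u_\phi\}\subseteq\mathscr{C}_\phi^-(I^-[\Gamma])$ is a routine check: by Theorem \ref{lem7} the function $u_\phi$ is a viscosity solution to \eqref{A1}, by Proposition \ref{pro:co} (applied to the locally semiconcave $u_\phi$ of Proposition \ref{pro:semiconcave}) it is locally Lipschitz, and its time orientation is always past-directed because Proposition \ref{lem2} makes $\nabla u_\phi(p)$ a past-directed timelike unit vector at every differentiability point $p$, while any reachable gradient $V=\lim_k\nabla u_\phi(p_k)\in\nabla^*u_\phi(p)$ satisfies $g(V,V)=-1$ and lies in the closure of the past-directed timelike cone, so is itself past-directed timelike. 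The bulk of the work is the reverse inclusion, which adapts the proof of Lemma \ref{lem:unique}: for $f\in\mathscr{C}_\phi^-(I^-[\Gamma])$ and a fixed $x\in I^-[\Gamma]$ I would show $f(x)=u_\phi(x)$ by two inequalities.

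\emph{Upper bound.} Let $y_x\in J^+_\Gamma(x)$ be the minimizer of \eqref{eq:u_phi} provided by Lemma \ref{lem1} and $\gamma_x$ the associated forward calibrated, unit-speed, distance-realizing geodesic of $u_\phi$ from $x$ to $y_x$; since $\gamma_x(t)\ll y_x\in\Gamma$ for $t<d(x,y_x)$, the curve $\gamma_x$ stays in $I^-[\Gamma]\cup\Gamma$. Because $f$ is a viscosity subsolution of \eqref{E1} on $I^-[\Gamma]$ with past-directed time orientation, $f$ obeys the Lorentzian domination inequality $f(p)\le f(q)-d(p,q)$ for every $p\le q$ joined by a causal curve contained in $I^-[\Gamma]$, the endpoint $q\in\Gamma$ being admitted by continuity of $f$; this is the Lorentzian counterpart of the fact that an eikonal subsolution is $1$-Lipschitz for the distance, obtained as in \cite{zhu2023global} by integrating $g(\nabla f(\sigma(t)),\dot\sigma(t))\ge|\nabla f(\sigma(t))|\,|\dot\sigma(t)|=|\dot\sigma(t)|$ along a maximal timelike geodesic $\sigma$ via the reverse Cauchy--Schwarz inequality for the past-directed unit vector $\nabla f$ and the future-directed causal vector $\dot\sigma$. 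Taking $p=x$ and $q=y_x$ gives $f(x)\le f(y_x)-d(x,y_x)=\phi(y_x)-d(x,y_x)=u_\phi(x)$.

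\emph{Lower bound.} Here $f$ must supply its own calibrated curve. Although Theorems \ref{thm:1} and \ref{the7} and Corollary \ref{cor:forward calibrated} are stated for globally defined solutions, their proofs are local in nature, so they remain valid at interior points of $I^-[\Gamma]$: $f$ is locally semiconcave on $I^-[\Gamma]$, the local variational formula of Theorem \ref{the7}(2) holds at each $x\in I^-[\Gamma]$, and hence $f$ admits a forward calibrated geodesic $\eta_x$ issuing from $x$. I would extend $\eta_x$ to its maximal domain inside $I^-[\Gamma]$; it remains a unit-speed future-directed timelike geodesic realizing the Lorentzian distance between its endpoints, and, as it cannot be prolonged while staying in $I^-[\Gamma]$, the Cauchy surface property forces it to strike $\Gamma$ at some $y=\eta_x(T)$ after finite arclength $T=d(x,y)\le d(x,\Gamma)<\infty$, the finiteness coming from the compactness of $J^+_\Gamma(x)$ (Lemma \ref{lem1}). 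Calibration together with $f|_\Gamma=\phi$ then gives $\phi(y)=f(y)=f(x)+T=f(x)+d(x,y)$, so $f(x)=\phi(y)-d(x,y)\ge\inf_{z\in J^+_\Gamma(x)}\{\phi(z)-d(x,z)\}=u_\phi(x)$. Combining the two bounds yields $f\equiv u_\phi$ on $I^-[\Gamma]$, which is \eqref{eq:unique2}.

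The principal difficulty is exactly this localization step: making precise that a viscosity solution of \eqref{E1} on the open set $I^-[\Gamma]$ with consistent past-directed orientation enjoys, at interior points, the semiconcavity and calibrated-curve structure that \cite{zhu2023global} establishes for global solutions, and that a maximal such calibrated geodesic can neither accumulate nor escape to infinity within $I^-[\Gamma]$ but must cross $\Gamma$ in finite Lorentzian time. A route that avoids constructing calibrated curves for $f$ directly is to first extend $f$ to a global solution --- set $\hat f=f$ on $I^-[\Gamma]\cup\Gamma$ and $\hat f(x)=\sup_{z\in\Gamma\cap J^-(x)}\{\phi(z)+d(z,x)\}$ on $I^+[\Gamma]$, check that $\hat f\in\mathscr{S}^-_{\Gamma_\phi}(M)$, and invoke Lemma \ref{lem:unique} verbatim --- the delicate point there being that $\hat f$ is a viscosity solution across $\Gamma$, which follows from the convexity of the set of past-directed causal vectors $V$ with $g(V,V)\le-1$ together with the semiconcavity of $\hat f$ near $\Gamma$.
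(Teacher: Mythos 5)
Your proposal follows essentially the same strategy as the paper: both establish $f\leqslant u_\phi$ by a domination inequality along the distance-realizing geodesic from $x$ to the minimizer $y_x$, and both establish $f\geqslant u_\phi$ by producing a forward calibrated curve for $f$ from the local variational representation of Theorem \ref{the7} and extending it until it meets $\Gamma$ (the paper simply cites \cite[Subsection 5.2]{zhu2023global} for this extension step and then declares "the rest follows from Lemma \ref{lem:unique}"). Your writeup is more detailed and self-contained, and the alternative route you sketch at the end --- glue $f$ with $\sup_{z\in\Gamma\cap J^-(x)}\{\phi(z)+d(z,x)\}$ on $I^+[\Gamma]$, verify the patched function lies in $\mathscr{S}^-_{\Gamma_\phi}(M)$, and invoke Lemma \ref{lem:unique} verbatim --- is a genuinely different and appealing reduction, but you are right that the viscosity-solution property across $\Gamma$ is not automatic and would require an argument of comparable length to the direct route, so the paper's choice to work locally and extend the calibrated curve is not obviously improvable.
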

\begin{proof}
%Actually, the proof of Proposition \ref{prop5} is a direct consequence of Lemma \ref{lem:unique} .
By the proof of Theorem \ref{the7}, if the time orientation of $u$ is always past-directed, $u$ has a local variational representation. Thus, we still have a forward calibrated curve $\eta_x:[0, t_0]\rightarrow M$ of $u$ at $x$. By \cite[Subsection 5.2]{zhu2023global}, we can extend this curve $\eta_x$ until $\eta_x$ intersects $\Gamma$. Then the rest of proof follows from Lemma \ref{lem:unique}.
\end{proof}
\subsection{The continuous dependence of solutions \eqref{A1} on initial values}\label{sec:conti-depend}
Suppose $\phi_n\in\Liploc(\Gamma)$ and consider the system
\begin{align}\label{A1n}\tag{EC$_{g,n}$}
\left\{
\begin{array}{ll}
	g(\nabla u_n(x),\nabla u_n(x))=-1,&x\in I^-[\Gamma],\\
	u_n(x)=\phi_n(x),&x\in\Gamma.
\end{array}	
\right.
\end{align}As a result of two subsections above, $u_n:=u_{\phi_n}$ is the unique viscosity solution to \eqref{A1n}. To study the continuous dependence of system \eqref{A1} on initial values, i.e., in some sense, we hope
\begin{align*}
	\lim_{n\to\infty}\eqref{A1n}=\eqref{A1}.
\end{align*}
under the assumption $\phi_n\rightrightarrows\phi$ as $n\to\infty$ on $\Gamma$.

\begin{Lem}\label{lem:equi-lip-semi}
	Suppose $\phi_n\rightrightarrows\phi$ on $\Gamma$. Then $\{u_n\}_{n\geqslant 1}$ has a subsequence of locally equi-Lipschitz and equi-semiconcave functions.
\end{Lem}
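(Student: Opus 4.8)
The plan is to establish, on each relatively compact piece of $I^-[\Gamma]$, a uniform Lipschitz bound and a uniform semiconcavity constant for the family $\{u_n\}$, and then extract a convergent subsequence by Arzel\`a--Ascoli combined with a diagonal argument over an exhaustion of $I^-[\Gamma]$. The two bounds come from revisiting the proofs of Proposition \ref{pro:semiconcave} and Proposition \ref{lem2}, but now tracking that the relevant constants depend only on the geometry of $M$ near a fixed compact set and on a uniform Lipschitz bound for the initial data $\{\phi_n\}$ — which we have, since $\phi_n\rightrightarrows\phi$ forces the $\phi_n$ to be uniformly bounded and (on compact subsets of $\Gamma$) uniformly Lipschitz once $n$ is large, after possibly passing to a subsequence.

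\medskip

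\noindent First I would fix an exhaustion $I^-[\Gamma]=\bigcup_{k\geqslant 1} W_k$ by relatively compact open sets with $\overline{W_k}\subset W_{k+1}$. For a fixed $k$, the key point is that the set of minimizers $y_x$ and the forward calibrated geodesics $\gamma_x$ for all $x\in \overline{W_k}$ and all large $n$ stay inside a single compact subset $K_k\subset M$: this follows from global hyperbolicity exactly as in Lemma \ref{lem1}, since $\bigcup_{x\in\overline{W_k}} J^+_\Gamma(x)$ is contained in a compact set. On $K_k$ the timelike sectional curvatures and the lengths of the calibrating segments are uniformly bounded, so the Hessian comparison argument of Proposition \ref{pro:semiconcave}, applied verbatim to the upper support functions $u_{\phi_n,x}$, yields a single semiconcavity constant $C(W_k)$ valid for every $u_n$ (large $n$) on $W_k$. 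For the Lipschitz bound, Proposition \ref{lem2} already shows $|\nabla u_n(x)|=1$ at every point of differentiability, i.e. $\nabla u_n$ is a unit timelike vector; comparing the Lorentzian and the auxiliary Riemannian norms on the compact set $\overline{W_k}$ gives a uniform bound $|\nabla u_n(x)|_h\leqslant \Lambda(W_k)$, hence a uniform $h$-Lipschitz constant there. (One also needs uniform boundedness of the values $u_n$ on $\overline{W_k}$, which is immediate from \eqref{eq:u_phi}, the uniform bound on $\phi_n$, and the boundedness of $d(x,y)$ for $(x,y)$ in the relevant compact set.)

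\medskip

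\noindent With equi-Lipschitz and equi-boundedness on each $W_k$, Arzel\`a--Ascoli gives a subsequence converging uniformly on $\overline{W_1}$; refining successively on $\overline{W_2},\overline{W_3},\dots$ and taking the diagonal subsequence produces a single subsequence $\{u_{n_j}\}$ converging locally uniformly on all of $I^-[\Gamma]$ to some limit, which is then locally Lipschitz and locally semiconcave (the semiconcavity of the limit with the same constant being exactly the content of Lemma \ref{lem:uni-semiconcave}). This is all the lemma asserts, so the proof stops here; the identification of the limit with $u_\phi$ and the promotion to a convergence-of-solutions statement belong to the next result.

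\medskip

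\noindent \textbf{The main obstacle} is the uniformity of the semiconcavity constant: one must check that every ingredient in the estimate \eqref{A23} — the curvature bound, the bound on the length of the maximal segment $\eta_z$, the lower bound on the $h$-distance from $\dot\eta_z(0)$ to the light cone (which prevents the support geodesics from degenerating), and the Christoffel symbols entering the passage from the Levi-Civita Hessian to the Euclidean Hessian — is controlled by a compact set depending only on $W_k$ and not on $n$. The only $n$-dependence enters through the minimizers $y_x\in\Gamma$ and the point $p\in\gamma_x$ used to define $u_{\phi_n,x}$; confining these to a fixed compact set uniformly in $n$ (again via global hyperbolicity and the uniform bound on $\phi_n$) is the step that makes the whole argument go through, and it is where the hypothesis $\phi_n\rightrightarrows\phi$ is genuinely used.
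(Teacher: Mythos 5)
Your overall strategy — reduce to a uniform Hessian bound for upper support functions on compact pieces of $I^-[\Gamma]$ via Lemma \ref{lemLL} — matches the paper's, but two key steps are argued incorrectly, and both are exactly where the paper's extraction of a subsequence of calibrating curves is doing essential work.

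Your Lipschitz estimate is wrong as stated: you claim $|\nabla u_n(x)|=1$ plus ``comparing the Lorentzian and the auxiliary Riemannian norms on the compact set $\overline{W_k}$'' gives a uniform bound on $|\nabla u_n(x)|_h$. It does not. At each point the set of unit timelike vectors is a noncompact hyperboloid, so $|\cdot|_h$ is unbounded on it, and Lorentzian unit normalization gives no control as the gradient approaches the light cone. To bound $|\nabla u_n|_h$ one needs the calibrating initial velocities $\dot\gamma_x^n(0)=-\nabla u_n(x)$ to lie in a fixed compact subset of $TM$ uniformly in $n$, which is not a consequence of the positions $\gamma_x^n$ lying in a compact subset of $M$.

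More seriously, your semiconcavity argument does not secure the \emph{lower} bound on $d(z,p_n)$ needed to keep $f_c(d(z,p_n))$ in \eqref{A20} under control. Confining $y_x^n$, $\gamma_x^n$ to a compact set $K_k$ bounds curvatures and gives an \emph{upper} length bound, but nothing you say prevents $d(x,y_x^n)\to 0$, in which case the support point $p_n\in\gamma_x^n$ must be chosen arbitrarily close to $x$ and $f_c(d(\cdot,p_n))\to\infty$. This is precisely why the paper's proof extends $\gamma_x^n$ to inextendible curves $\tilde\gamma_x^n$, extracts a subsequence converging to a limit $\tilde\gamma_x$ (\cite[Lemma 14.2]{beem_global_1981}), verifies by upper semicontinuity of Lorentzian arclength that $\tilde\gamma_x|_{[0,T^*]}$ is a maximal timelike segment of positive length, and then picks $p_{n_k}\to p$ along the converging curves to get a uniform $d(\cdot,p_{n_k})\geq l>0$ on a fixed neighborhood $O$. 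That convergence is the source of the subsequence in the lemma's conclusion; your ``main obstacle'' paragraph correctly names the obstacle but asserts, without justification, that compact confinement resolves it. Finally, a minor error: uniform convergence $\phi_n\rightrightarrows\phi$ does not yield, even after a subsequence, a uniform Lipschitz constant for the $\phi_n$ — though the paper's argument does not actually need one.
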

\begin{proof}
	Owing to Lemma \ref{lem9}, the lemma is equivalent to the claim that for a suitable neighbourhood $O$ of $x$, the bound of the Hessian of distance function $d(\cdot,p_n)$ defined on $O$ is independent on $n$ for some $p_n\in I^-[\Gamma]$. Recall that for each $n$ there is a forward calibrated curve denoted by $\gamma_x^n:[0, d(x, y_x^n)]\rightarrow M$ connecting $x$ to $y_x^n$, where $y_x^n$ is a minimizer for $\phi_n(\cdot)-d(x,\cdot)$.
	
	For every $n$, we can also extend $\gamma_x^n$ to a future-directed inextendible curve and reparameter by $h$-arc length as $\tilde{\gamma}_x^n:[0, +\infty)\rightarrow M$. \cite[Lemma 14.2]{beem_global_1981} implies that there exists a subsequence $\{\tilde{\gamma}_x^{n_k}\}$ and an intextendible future-directed causal curve $\tilde{\gamma}_x:[0, +\infty)\rightarrow M$ with $\tilde{\gamma}_x(0)=x$ such that $\{\tilde{\gamma}_x^{n_k}\}$ converges to $\tilde{\gamma}_x$ uniformly under the Riemannian metric $h$ on arbitrary compact subset of $\R^+$.
	
	Since $\Gamma$ is a Cauchy surface, there exists $T>0$ such that $\tilde{\gamma}_x(T)\in\Gamma$. Recall that $\gamma_x^n$ is a maximal segment on $[0, d(x, y_x^n)]$. There exists $T^*<T$ and $\{T_{n_k}\}$, which satisfy $\gamma_x^{n_k}(T_{n_k})=\tilde{\gamma}_x^{n_k}(T^*)$ and $\gamma_x^{n_k}$ are well-defined on $[0, T_{n_k}]$. By the upper semicontinuity of Lorentzian length functional, we have
\begin{align*}
	L(\tilde{\gamma}_x)|_{[0, T^*]}\geqslant\limsup_{k\rightarrow\infty}L(\tilde{\gamma}_x^{n_k})|_{[0, T^*]}=\limsup\limits_{k\rightarrow\infty}L(\gamma_x^{n_k})|_{[0, T_{n_k}]}=\limsup\limits_{k\rightarrow\infty}d(x, \gamma_x^{n_k}(T_{n_k}))=d(x, \tilde{\gamma}_x(T^*)).
\end{align*}
It means $\tilde{\gamma}_x|_{[0, T^*]}$ is a future-directed timelike maximal geodesic up to a time reparameteriztion. We use ${\gamma}_x$ to denote the unit speed future-directed maximal geodesic after reparameteriztion. Then the internal of ${\gamma}_x$ is free of cut point. Let $\{t_{n_k}\}\subset [0,T^*]$ and $t_{n_k}$ monotone decrease to some $t^*\in(0,T^*)$. Also, set $p_{n_k}:=\tilde{\gamma}_x^{n_k}(t_{n_k})$ and $p:=\tilde{\gamma}_x(t^*)$. Because of Proposition 9.7 and Proposition 9.29 in \cite{beem_global_1981}, there exists some neighbourhood $O$ of $x$ contained in all $I^-(p_{n_k})$ such that $d(\cdot, p_{n_k})$ is smooth on $O$.

It is easy to check that there is a uniform positive lower bound to $d(\cdot, p_{n_k})$ on $O$, called $l$. Then due to the fact in Lemma \ref{lemLL}, the lower bound of the Hessian of distance function $d(\cdot, p_{n_k})$ are
\begin{align*}
%\label{eq:uni-hess}
	\mathbf{Hess}\,d(\cdot,p_{n_k})\geqslant
	\left\{
	\begin{array}{ll}
	-f_c(l)I,&c>0,\\
	-l^{-1}I, &c=0,\\
	-\sqrt{-c}\pi^{-1} I,&c<0,
	\end{array}
	\right.	
\end{align*}
which are obviously independent on $n_k$. In addition, for each $n_k$, we likewise define the upper support function of $u_{n_k}$ at $p_{n_k}$ on $O$ as
\begin{align*}
	u_{n_k,x}(z):=\phi(y_x^{n_k})-d(z, p_{n_k})-d(p_n, y_x^{n_k}).
\end{align*}
Then the fact $\{u_{n_k}\}$ is equi-Lipschitz and equi-semiconcave on $\bar O$ is a direct consequence of \cite[Lemma 14.20]{beem_global_1981} and Lemma \ref{lem9}.
\end{proof}
\begin{The}\label{thm:conti}
	Under the assumptions of Lemma \ref{lem:equi-lip-semi},  $u_n\rightrightarrows u_\phi$ on any compact set of $I^-[\Gamma]$, which means \eqref{A1} depends continuously on the initial function defined on $\Gamma$.
\end{The}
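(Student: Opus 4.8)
The plan is to exploit that $u_n=u_{\phi_n}$ and $u_\phi$ are infima of the functions $y\mapsto\phi_n(y)-d(x,y)$ and $y\mapsto\phi(y)-d(x,y)$ over the \emph{same} set $J^+_\Gamma(x)$, which depends only on the causal structure of $(M,g)$ and not on the datum. Since for every $x\in I^-[\Gamma]$ and every $y\in J^+_\Gamma(x)$ the two competitors differ by exactly $\phi_n(y)-\phi(y)$, passing to the infimum over $y\in J^+_\Gamma(x)$ gives the elementary two-sided bound
\begin{align*}
 |u_n(x)-u_\phi(x)|\leqslant \sup_{y\in J^+_\Gamma(x)}|\phi_n(y)-\phi(y)|,\qquad x\in I^-[\Gamma].
\end{align*}
Thus the full sequence will converge — no subsequence extraction is needed — with the rate controlled by the oscillation of $\phi_n-\phi$ over the part of $\Gamma$ that is causally seen by the relevant points.

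To upgrade this to uniform convergence on a compact set $K\subset I^-[\Gamma]$, the one substantial step is to show that $\mathcal K:=\bigcup_{x\in K}J^+_\Gamma(x)=J^+[K]\cap\Gamma$ is a \emph{compact} subset of $\Gamma$. Closedness is immediate ($J^+[K]$ is closed for compact $K$ in a globally hyperbolic space-time, and $\Gamma$ is closed), so by completeness of $h$ it suffices to rule out unboundedness. I would argue by contradiction via the limit curve theorem \cite[Lemma 14.2]{beem_global_1981}: were $\mathcal K$ $h$-unbounded, choose $q_j\in J^+_\Gamma(x_j)$ with $x_j\in K$ escaping every $h$-bounded set, join $x_j$ to $q_j$ by a future-directed causal curve $\gamma_j$ reparametrised by $h$-arclength, and pass to a limit curve — a future-inextendible causal curve $\gamma_\infty$ from $x_\infty=\lim x_j\in K$; since $\Gamma$ is a Cauchy surface, $\gamma_\infty$ crosses $\Gamma$ at some finite $h$-parameter $t^*$ and thereafter lies in $I^+(\Gamma)$, whereas every $\gamma_j$ remains in $I^-(\Gamma)$ until it reaches $q_j$ (its only point on $\Gamma$, as $\Gamma$ is acausal), so $\gamma_j(t^*+\varepsilon)\to\gamma_\infty(t^*+\varepsilon)\in I^+(\Gamma)$ for $j$ large — a contradiction. (Alternatively, choosing $\Gamma$ to be a level set of a steep temporal function $\tau$ as in \cite[Theorem 3]{bernard_lyapounov_2018}, the steepness inequality bounds the $h$-length of each such $\gamma_j$ by $\sup_K\tau-\inf_K\tau$, whence $\mathcal K\subset\overline{B}_h(K,R)$ for a uniform $R$, compact by Hopf--Rinow; here one also uses $J^+_\Gamma(x)=\overline{I^+_\Gamma(x)}$ from the proof of Lemma \ref{lem1}.) Granting this, for every $x\in K$,
\begin{align*}
 |u_n(x)-u_\phi(x)|\leqslant\sup_{y\in J^+_\Gamma(x)}|\phi_n(y)-\phi(y)|\leqslant\sup_{y\in\mathcal K}|\phi_n(y)-\phi(y)|\longrightarrow 0
\end{align*}
as $n\to\infty$, since $\phi_n\rightrightarrows\phi$ on the compact set $\mathcal K$; hence $u_n\rightrightarrows u_\phi$ on $K$.

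I would also record the route that genuinely uses Lemma \ref{lem:equi-lip-semi}: pass to a locally equi-Lipschitz, equi-semiconcave subsequence, extract by Arzel\`{a}--Ascoli a locally uniformly convergent sub-subsequence with limit $u_*$, and use Lemma \ref{lem:uni-semiconcave} together with the normalisation $g(\nabla u_{n_k},\nabla u_{n_k})=-1$ — which persists in the limit at points of differentiability and rules out a lightlike limiting gradient — to verify that $u_*$ is a locally semiconcave viscosity solution of \eqref{A1} on $I^-[\Gamma]$ with always past-directed time orientation; uniqueness (Theorem \ref{thm:unique}) then forces $u_*=u_\phi$, and since every subsequence admits a further subsequence with this same limit, the full sequence converges. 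The main obstacle in this second route — and essentially the only delicate point in either — is checking that the limiting gradients of $u_*$ stay timelike and past-directed (a priori they are merely causal) and that the uniqueness argument, built from forward calibrated curves, still applies to $u_*$, which is defined only on the half-space $I^-[\Gamma]$. In the direct approach this difficulty does not arise, and the sole nontrivial ingredient is the compactness of $\mathcal K$.
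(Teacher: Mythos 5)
Your Route 1 is correct and is genuinely different from — and strictly more elementary than — the paper's own argument, which coincides with your Route 2. The paper's proof runs through Arzel\`a--Ascoli (using the equi-Lipschitz/equi-semiconcave bounds from Lemma \ref{lem:equi-lip-semi}), then Lemma \ref{lem:uni-semiconcave} to see that a sublimit is a semiconcave viscosity solution, then identifies the sublimit with $u_\phi$ by matching the boundary data and invoking the uniqueness Theorem \ref{thm:unique}, and finally promotes subsequential convergence to full-sequence convergence. Your Route 1 bypasses all of this by exploiting that $u_{\phi_n}$ and $u_\phi$ are infima over the \emph{same} index set $J^+_\Gamma(x)$ of functions that differ pointwise by exactly $\phi_n-\phi$, which gives the quantitative estimate
\begin{align*}
|u_n(x)-u_\phi(x)|\leqslant\sup_{y\in J^+_\Gamma(x)}|\phi_n(y)-\phi(y)|
\end{align*}
outright. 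That is both shorter and more informative — it delivers a rate, needs no subsequence extraction, and does not require the semiconcavity machinery. The only nontrivial ingredient is compactness of $\mathcal K=J^+[K]\cap\Gamma$ for compact $K\subset I^-[\Gamma]$, and both arguments you sketch for it (the limit-curve theorem of Beem--Ehrlich, or the steep temporal function from \cite{bernard_lyapounov_2018} combined with the steepness inequality \eqref{E2} and Hopf--Rinow for the complete metric $h$) are sound; in fact if one reads the paper's hypothesis $\phi_n\rightrightarrows\phi$ as uniform convergence on all of $\Gamma$ rather than on compacts, even that step becomes unnecessary. The trade-off is robustness: the paper's route makes no direct use of the variational formula \eqref{eq:u_phi} and would apply to any sequence of solutions known only to satisfy \eqref{A1n}, whereas your Route 1 is tied to the representation $u_n=u_{\phi_n}$. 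Since the paper sets $u_n:=u_{\phi_n}$ explicitly, that is not a limitation here, and your direct argument is the cleaner proof.
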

\begin{proof}
	Invoking Arzela-Ascoli theorem, we can get a subsequence of $\{u_{n_k}\}$  on $\bar O$ metioned in proof of Lemma \ref{lem:equi-lip-semi} which uniformly converges to some $u$ on any compact subset $\bar O$. Lemma \ref{lem:uni-semiconcave} claims that $u$ is also semiconcave with the same constant, whence it is also Lipschitz. Without loss of generality, this subsequence is still denoted by $\{u_{n_k}\}$ for the sake of brevity. Also by Lemma \ref{lem:uni-semiconcave}, for any $y$, the differentiable point of $u$, there exists $\{y_{k}\}_{k\geqslant 1}$ such that $y_{k}\to y$ and $u_{n_k}$ is differentiable at $y_{k}$. In this case, Lemma \ref{lem:uni-semiconcave} implies $\nabla u_{n_k}(y_k)\to \nabla u(y)$. Then by the continuity of Lorentz metric $g$,
	\begin{align}\label{eq:vis-limit}
		g(\nabla u(x),\nabla u(x))=\lim_{k\to\infty}g(\nabla u_{n_k}(y_k),\nabla u_{n_k}(y_k))=-1.
	\end{align}	
Similar to the proof of Theorem \ref{lem7}, we can show that $u$ is a viscosity solution to \eqref{A1} from \eqref{eq:vis-limit}.

For any fixed $x\in\Gamma$, we choose a sequence $\{x_m\}_{m\geqslant 1}\subset I^-[\Gamma]$ with $x_m\rightarrow x$. Since $u_{n_k}\rightrightarrows u$ and $\phi_{n_k}\rightrightarrows \phi$, then by the continuous extension of $u$ to $\Gamma$,
\begin{align*}
  u(x)=\lim_{m\rightarrow\infty}u(x_m)=\lim_{m\rightarrow\infty}\lim_{k\rightarrow\infty}u_{n_k}(x_m)=\lim_{k\rightarrow\infty}u_{n_k}(x)=\lim_{k\rightarrow\infty}\phi_{n_k}(x)=\phi(x).
\end{align*}Combining with the arbitrariness of $x\in\Gamma$, $u|_\Gamma=\phi$.
Due to the uniqueness of solution to \eqref{A1}, $u=u_\phi$. Note that the argument above does not depend on some fixed $O$ and all convergent subsequences of $\{u_n\}$ will converge to $u_\phi$, and it yields $u_n\rightrightarrows u_\phi$ on any compact subset of $I^-[\Gamma]$.
\end{proof}
\begin{Rem}
	All of results on well-posedness in this paper still hold true, when we consider instead the following Cauchy type problem
	\begin{align*}
		\left\{
		\begin{array}{ll}
			g(\nabla u(x),\nabla u(x))=-1, &x\in I^+[\Gamma],\\
			u(x)=\phi(x),&x\in\Gamma.
		\end{array}
        \right.
	\end{align*}
\end{Rem}
The only main difference is that the unique solution to the equation above is given by
\begin{align*}
	u_\phi(x):=-\sup_{y\in J_{\Gamma}^-(x)}\{\phi(y)+d(y,x)\},\,\,x\in I^+[\Gamma],
\end{align*}
the local semiconcavity of which ensures all the proofs go through with slight modification.

\noindent\textbf{Acknowledgements.} Xiaojun Cui and Siyao Zhu are supported by the National Natural Science Foundation of China (Grant No. 12171234).
\bibliographystyle{plain}
\bibliography{mybib}

\end{document}